\def\BibTeX{{\rm B\kern-.05em{\sc i\kern-.025em b}\kern-.08em
    T\kern-.1667em\lower.7ex\hbox{E}\kern-.125emX}}
\newtheorem{theorem}{Theorem}
\newtheorem{lemma}{Lemma}
\newtheorem{remark}{Remark}
\newtheorem{assumption}{Assumption}
\newcommand{\li}{}
\newcommand{\ml}{}
\newcommand{\mml}{}
\newcommand{\icl}{}
\newcommand{\mll}{}
\author{Mengmou~Li, \IEEEmembership{Member, IEEE}, Ioannis~Lestas, \IEEEmembership{Member, IEEE}, and Masaaki~Nagahara,  \IEEEmembership{Senior Member, IEEE}
\thanks{This work is supported by JSPS KAKENHI under Grant Numbers 26K17401 and 24K21314. This work is also supported by Japan Science and Technology Agency (JST) as part of Adopting Sustainable Partnerships for Innovative Research Ecosystem (ASPIRE), Grant Number JPMJAP2402.
}
\thanks{M.~Li and M.~Nagahara are with the Graduate School of Advanced Science and Engineering, Hiroshima University, Higashi-Hiroshima City, Japan (e-mail: mmli.research@gmail.com; nagahara@ieee.org).}
\thanks{I.~Lestas is with the Department of Engineering, University of Cambridge, UK (icl20@cam.ac.uk).}
}
\title{First-Order Projected Algorithms With the Same Linear Convergence Rate Bounds as Their Unconstrained Counterparts}
\begin{document}

\maketitle
\begin{abstract}
\mll{In this paper, we propose a systematic approach for extending first-order optimization algorithms, originally designed for unconstrained strongly convex problems, to handle closed convex set constraints.}
\mll{We show that the resulting projected algorithms} retain the same linear convergence rate bounds, \mll{provided that the underlying unconstrained optimization algorithms admit a quadratic Lyapunov function obtained from integral quadratic constraint (IQC) analysis.}
%
The projected algorithms are constructed by applying a projection in the norm induced by the Lyapunov matrix, ensuring both constraint satisfaction and optimality at the fixed point. Furthermore, under a linear transformation associated with this matrix, the projection becomes non-expansive in the Euclidean norm, 
\li{thereby preserving the convergence rate bounds under the composition of the linearly convergent algorithmic operator and the projection.}
Our results indicate that, when analyzing worst-case convergence rates or \icl{when} synthesizing 
\icl{first-order optimization algorithms with potentially \mml{higher-order} dynamics,} it suffices to focus solely on the unconstrained dynamics, \mll{since the same parameters or stepsizes can be employed without retuning.}

\end{abstract}
\begin{IEEEkeywords}
Constrained optimization, projection operator, saturation, geometric convergence rate, integral quadratic constraint, quadratic Lyapunov function
\end{IEEEkeywords}
%
\IEEEpeerreviewmaketitle

\section{Introduction}
\IEEEPARstart{T}{he} design and analysis of various optimization algorithms have attracted significant attention in recent years. In particular, obtaining tight exponential convergence rate bounds for various optimization algorithms based on frequency-domain analysis has emerged as an active area of research \cite{lessard2016analysis,michalowsky2021robust,scherer2021convex,zhang2022zames,su2025exponential,zhang2024frequency}.
Frequency-domain analysis methods can be unified within the framework of integral quadratic constraints (IQCs)\cite{megretski1997system}. 
Considerable progress has been made in investigating the necessity of IQC analysis \cite{jonsson2001lecture,khong2020converse} and the role of the well-known O'Shea--Zames--Falb (OZF) multipliers within it \cite{carrasco2016zames,su2023necessity,gyotoku2024dual}. The latter also includes demonstrating the phase containment of other classes of multipliers within the OZF class, particularly the Popov multipliers that are unbounded at infinity
\cite{carrasco2013equivalence,carrasco2014multipliers,li2024generalization}.
The IQC framework is powerful in that less conservative convergence rate \mml{bounds} may be obtained in both continuous-time \cite{hu2016exponential} and discrete-time schemes \cite{boczar2015exponential}. It also gives insights to disprove the global convergence of the heavy-ball method \cite{lessard2016analysis}.
 \li{Through the well-established Kalman--Yakubovich--Popov (KYP) lemma \cite{rantzer1996kalman}, IQCs can also be formulated in the time domain and are closely connected to quadratic Lyapunov or storage functions when rational and bounded multipliers are used.}
The time-domain IQC analysis can be modified in the context of dissipativity to analyze sublinear rates under varying stepsizes \cite{hu2017dissipativity}.
Recently, efforts have also been made to synthesize accelerated algorithms within the IQC framework \cite{scherer2021convex}, \cite{scherer2023optimization}. A variant of Nesterov's algorithm, termed the triple momentum \li{method (TMM)}, has been proposed in \cite{van2017fastest} with improved convergence rates.
\mll{Meanwhile, there exists a parallel line of research to IQC-based analysis in the optimization community, termed performance estimation problems (PEPs), which formulates interpolation conditions for the convex gradient mappings \cite{taylor2017smooth,taylor2017exact}.}



In addition to unconstrained algorithms, the study of IQC-based analysis for the convergence rates of constrained optimization algorithms also receives much research attention \cite{dhingra2018proximal,hassan2021proximal,li2024exponential}.
The exponential stability of the continuous-time proximal augmented Lagrangian method has been investigated in \cite{dhingra2018proximal}, and similar stability results for the proximal gradient and Douglas-Rachford splitting flows are presented in \cite{hassan2021proximal}. 
The convergence rates of projected versions of Nesterov's accelerated method, the alternating direction method of multipliers (ADMM), and the mirror descent method have been studied in \cite{lessard2016analysis, lessard2022analysis, li2025convergence}, respectively. However, these works focus solely on existing projected algorithms and rely on numerical methods when frequency-domain inequalities (FDIs) in the IQC framework cannot be solved analytically, leaving them disconnected from their unconstrained counterparts. In the literature, analytical guarantees for projected algorithms that achieve the same convergence rates as their unconstrained counterparts have only been established for a few cases, such as projected gradient descent and its accelerated methods \cite{bertsekas1999nonlinear,beck2009fast,nesterov2013gradient}.
It is conjectured in \cite{lessard2016analysis} that any algorithm of the Lur'e form (\eqref{eq: LTI system}, \eqref{eq: gradient input to u_k} in this work) that converges has a proximal variant that converges at precisely the same rate. In this paper, we provide \li{a partial affirmative} answer to this conjecture \li{for a class of algorithms satisfying the assumptions stated later}.

\mll{The proposed framework in this work builds upon classical tools in control, such as Lyapunov functions \li{and} IQCs.}
\mll{We propose a systematic approach for extending first-order optimization algorithms, originally designed for unconstrained strongly convex problems, to handle closed convex set constraints.}
\mll{We show that the resulting projected algorithms} retain the same linear convergence rate bounds, \mll{provided that the underlying unconstrained optimization algorithms admit a quadratic Lyapunov function, \li{which can be verified by suitable IQC conditions}.}
%
The projected algorithms are constructed by applying a projection in the norm induced by the Lyapunov matrix, ensuring both constraint satisfaction and optimality at the fixed point. Furthermore, under a linear transformation associated with this matrix, the projection becomes non-expansive in the Euclidean norm, 
\li{allowing the preservation of the convergence rate bounds under the composition of the linearly convergent algorithmic operator and the projection.}
Our results indicate that, when analyzing worst-case convergence rates or synthesizing 
first-order \icl{optimization algorithms with \mml{higher-order} dynamics}, it suffices to focus solely on the unconstrained dynamics.
The contributions are listed below:
\begin{enumerate}
    \item \mll{We provide \li{a partial affirmative} answer to the conjecture by \cite{lessard2016analysis}, showing that for strongly convex optimization problems, \li{a broad class of} Lur’e-type unconstrained first-order algorithms admit a \li{projected} variant with the same rate bound, provided that the rate bound is established via a quadratic Lyapunov function derived from the IQC analysis.
    This result establishes a theoretical bridge between unconstrained and constrained optimization dynamics.}
    \item \mll{
    In contrast to existing studies that analyze the convergence rate bounds of specific projected algorithms \cite{lessard2016analysis, lessard2022analysis, li2025convergence}, our framework applies to a broad class of Lur’e-type algorithms. Specifically, it generates weighted projection steps in the norm induced by the Lyapunov matrix, which guarantees both constraint satisfaction and optimality at the fixed point. Under a corresponding linear transformation, the projection becomes non-expansive in the Euclidean norm, 
    \li{thereby preserving the convergence rate bounds under the composition of the linearly convergent algorithmic operator and the projection.}
    }
\end{enumerate}
\mll{It is worth noting that the same parameters or stepsizes as in the unconstrained case are employed, and no retuning is needed when extending algorithms to constrained problems. This property simplifies implementation while preserving the convergence rate \li{bounds}.}
\mll{The question of whether a quadratic Lyapunov function necessarily exists for establishing specific convergence rates is closely related to the necessity of IQC and OZF multipliers \cite{carrasco2016zames, su2023necessity}, which lies beyond the scope of this work.}
The analysis presented in this work is elementary and, in our view, can be easily extended to other algorithms beyond Lur’e systems, provided the existence of certain Lyapunov functions.
This perspective may also be relevant to anti-windup analysis \cite{galeani2009tutorial}, as saturation can be viewed as a special case of projection.
Finally, we restrict our attention to theoretical convergence rate bounds and do not address computational considerations such as the practical feasibility of executing the projection.

The rest of the paper is organized as follows.
In Section~\ref{Preliminaries}, we review knowledge of convex analysis, Lyapunov theory for exponential stability, linear convergence, and the IQC framework.
In Section~\ref{Main results}, projected algorithms are constructed and shown to achieve the same convergence rate \mml{bounds} as their unconstrained counterparts.
In Section~\ref{Examples}, we present the projected gradient descent and propose the projected triple momentum \li{method} as illustrative examples, and provide a numerical demonstration. Finally, the paper is concluded in Section~\ref{Conclusion}.

\section{Preliminaries}\label{Preliminaries}
$\mathbb{N}$, $\mathbb{R}$, $\mathbb{C}$, $\mathbb{R}^{n}$, $\mathbb{R}^{n \times m}$ denote the sets of natural numbers, real numbers, complex numbers, $n$-dimensional real vectors, and $n\times m$ real matrices, respectively.
The $n \times n$ identity matrix, $n \times n$, and $ n \times m$ zero matrices are denoted by $I_{n}$, $0_{n}$, and $0_{n \times m}$, respectively. Subscripts are omitted when they can be inferred from the context.
The block diagonal operator is represented by $\operatorname{blkdiag}(\cdot)$ and the Kronecker product is denoted by $\otimes$.
The $z$-transform of a time-domain sequence $\{ x_{k} \}: = (x_0 , x_1 , \ldots)$ is defined by $\hat{x} (z): = \sum_{k = 0}^{\infty} x_{k} z^{-k}$.
We use $\li{x_*}$ to denote a reference point of $x$, or a fixed point when an operator is acting on $x$.

\subsection{Convex analysis}
The general form of a convex optimization problem is given by 
\begin{align}\label{eq: constrained convex optimization problem}
    & \underset{y \in \Omega \subseteq \mathbb{R}^{d}}{\text{minimize}} ~ f(y)
\end{align}
where $f:\mathbb{R}^{d} \to \mathbb{R}$ is the objective function, assumed to be convex, and $\Omega$ is the feasibility set, assumed to be closed and convex. Problem \eqref{eq: constrained convex optimization problem} includes the unconstrained optimization as a special case when $\Omega = \mathbb{R}^{d}$.

\mml{In this work, we consider objective function $f \in S(m, L)$, where $S (m, L)$ denotes the class of differentiable, \textit{$m$-strongly convex}, and \textit{$L$-Lipschitz smooth} functions with $L \geq m > 0$. Specifically, for all $x,~ y \in \mathbb{R}^{d}$, the following condition holds,}
\begin{align*}
    m \|x - y \|^2_2 \leq \left( \nabla f(x) - \nabla f (y)\right)^{\!\top} \!(x - y) \leq L \| x - y\|^2_2.
\end{align*}
Moreover, we consider optimization algorithms with \textit{fixed stepsizes} in this work.

A map $\phi : \mathbb{R}^d \to \mathbb{R}^d$, is said to be \textit{sector-bounded} in $[\alpha,  \beta]$ with $\alpha \leq \beta \in \mathbb{R}$, if $\left(\phi (x) - \alpha x \right)^{\!\top} \!\left(\phi (x) - \beta x \right) \leq 0$, for all $x\in \mathbb{R}^{\li{d}}$. It is said to be \li{\textit{slope-restricted}} in $[\alpha,  \beta]$ with $\alpha \leq \beta \in \mathbb{R}$, if $\left( \phi(x) - \phi(y) - \alpha (x - y) \right)^{\!\top} \! \left( \phi(x) - \phi(y) - \beta (x - y) \right) \leq 0$, for all $x, y \in \mathbb{R}^{\li{d}}$. When $\phi (0) = 0$, a slope-restricted $\phi$ is also sector-bounded in the same sector, but the converse is not generally true.
\mml{It is worth noting that $\nabla f$ with $f \in S(m, L)$ is slope-restricted in $[m, L]$.}

Given a closed and convex set $\Omega$, its \textit{normal cone} at a point $x\in \Omega$ is defined by
$N_{\Omega} (x) = \left\{ v: v^{\top} ( y - x ) \leq 0, ~ \forall  y \in \Omega \right\}$.
\ml{The optimal solution $y^{\textup{opt}}$ to problem \eqref{eq: constrained convex optimization problem} is given by \cite{ruszczynski2011nonlinear},
\begin{align}\label{eq: optimal solution to the constrained problem}
    - \nabla f(y^{\textup{opt}}) \in N_{\Omega} (y^{\textup{opt}}).
\end{align}
The projection of a point $x$ onto a closed convex set $\Omega$ is defined by 
    $
    \Pi_{\Omega} (x) = \underset{y \in \Omega}{\operatorname{argmin}} \| x - y \|_2^2.
    $
A projection is \textit{non-expansive}, i.e.,
\begin{align*}
    \left\| \Pi_{\Omega} (x) - \Pi_{\Omega} (y) \right\|_{2} \leq  \| x - y\|_2,~\text{for all~} x, y.
\end{align*}
The normal cone $N_{\Omega} (x)$ for $x \in \Omega$ can also be written as
$N_{\Omega} (x) = \left\{ v : \Pi_{\Omega} (x + v) = x\right\}$.}
We also define a projection in the weighted inner product space,
\begin{align*}
\Pi_{\Omega}^{\li{P}} (x) = \underset{y \in \Omega}{\operatorname{argmin}} \| x - y \|_{\li{P}}^{2}
\end{align*}
where $\| x\|_{\li{P}}:=  \sqrt{x^{\top} \li{P} x}$ is a $\li{P}$-weighted norm for positive definite \li{matrix} $\li{P}$.

\subsection{Exponential stability and linear convergence}\label{sec: Exponential stability and linear convergence}
Let us review the concepts of linear convergence \cite{ortega2000iterative}, contraction mapping, and exponential stability of discrete-time systems by Lyapunov theory \cite{Khalil2002,bof2018lyapunov}.

Given a sequence $\{ x_{k} \}$ that converges to a reference point $x_{\li{*}}$ in some norm $\| \cdot \|$.
The convergence is said to be \textit{Q-linear} with rate $\rho$ if there exists a constant $\rho \in (0, 1)$ such that $\| x_{k+1} - x_{\li{*}} \| \leq \rho \|x_{k} - x_{\li{*}} \|$, for all $k \geq 0$. It is said to be \textit{R-linear} with rate $\rho$ if there exist constants $\rho \in (0, 1)$ and $c > 0$ such that $\| x_{k} - x_{\li{*}} \| \leq c \rho^{k}$, for all $k \geq 0$.
\begin{lemma}[\mll{\hspace{1sp}\cite{bof2018lyapunov}}]\label{lem: exponential stability}
Consider the non-autonomous discrete-time system
\begin{align*}
    x_{k+1} = f(k, x_{k}), \quad f(k, 0) = 0, ~ \forall k \geq 0
\end{align*}
where $f: \mathbb{N} \times \mathbb{R}^{n} \to \mathbb{R}^{n}$ is Lipschitz continuous with respect to the second argument. 
If there exists $V : \mathbb{N} \times \mathbb{R}^{n} \to \mathbb{R}$, \mll{and some positive constants $c_1$, $c_2$, $c_3$} such that
\begin{align*}
    c_1 \| x_{k} \|^2 \leq V (k, x_{k}) \leq c_2 \| x_{k} \|^2\\
    V (k+1, x_{k+1} )  - V (k, x_{k}) \leq - c_{3} \| x_{k} \|^2
\end{align*}
for all $k \geq 0$, the equilibrium point $x_{\li{*}} = 0$ is exponentially stable, i.e., there exist positive constants $c$, $\lambda$, such that $\|x_{k}\| \leq c \| x_{0}\| e^{- \lambda k}$, for all $k \geq 0$.
\end{lemma}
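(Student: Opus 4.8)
The plan is to combine the two Lyapunov inequalities to show that the value of $V$ along the trajectory decays geometrically, and then to transfer this decay back to $\|x_k\|$ using the quadratic sandwich bounds. No sophisticated machinery is needed; this is the standard discrete-time converse-to-LaSalle Lyapunov argument adapted to extract an explicit rate.

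First I would rewrite the decrease condition as $V(k+1, x_{k+1}) \leq V(k, x_k) - c_3 \|x_k\|^2$ and convert the last term into a multiple of $V(k,x_k)$. For this I would use the upper sandwich bound $V(k, x_k) \leq c_2 \|x_k\|^2$, equivalently $\|x_k\|^2 \geq V(k, x_k)/c_2$, which yields
$$V(k+1, x_{k+1}) \leq \left(1 - \tfrac{c_3}{c_2}\right) V(k, x_k).$$
Writing $\mu := 1 - c_3/c_2$, a short verification shows $\mu \in [0,1)$: since $V(k+1,x_{k+1}) \geq c_1\|x_{k+1}\|^2 \geq 0$ while also $V(k+1,x_{k+1}) \leq (c_2-c_3)\|x_k\|^2$, we must have $c_2 \geq c_3$, hence $\mu \geq 0$, and $c_3 > 0$ forces $\mu < 1$.

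Iterating this one-step contraction from $k=0$ gives, by induction, $V(k, x_k) \leq \mu^k\, V(0, x_0)$. I would then close the loop with the two sandwich bounds, namely $c_1\|x_k\|^2 \leq V(k,x_k) \leq \mu^k V(0,x_0) \leq c_2\,\mu^k\|x_0\|^2$, which rearranges to
$$\|x_k\| \leq \sqrt{\tfrac{c_2}{c_1}}\;\mu^{k/2}\,\|x_0\|.$$
Taking $c = \sqrt{c_2/c_1}$ and, in the generic case $\mu>0$, $\lambda = \tfrac{1}{2}\ln(1/\mu) > 0$ produces exactly the claimed bound $\|x_k\| \leq c\,\|x_0\|\,e^{-\lambda k}$ (the degenerate case $\mu = 0$ is immediate, since then $x_k = 0$ for all $k\geq 1$).

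The argument is entirely elementary, so I do not anticipate a genuine obstacle; the only step demanding care is confirming $\mu \in [0,1)$, since this is what makes the geometric iteration valid and guarantees $\lambda > 0$. That verification relies essentially on the nonnegativity of $V$ (inherited from the lower sandwich bound) together with the ordering $c_3 \leq c_2$ that the two hypotheses jointly force; the Lipschitz continuity of $f$ is not used in the rate estimate itself but ensures the trajectory is well defined.
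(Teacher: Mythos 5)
Your proof is correct and follows essentially the same route as the paper: convert the decrease condition into a one-step geometric contraction of $V$ using the upper sandwich bound, iterate, and transfer back to $\|x_k\|$ via the lower bound, with $\rho^2 = 1 - c_3/c_2$ and $c = \sqrt{c_2/c_1}$. Your contraction factor $1 - c_3/c_2$ is in fact the correct one (the paper's displayed $1 - c_1/c_3$ appears to be a typo), and your explicit check that $\mu \in [0,1)$ is a small but worthwhile addition the paper omits.
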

It is more common to use \textit{linear convergence} to describe algorithms in the optimization community, though it is equivalent to exponential convergence since $e^{-\lambda k} = \rho^{k}$, with $\rho = e^{-\lambda} \in (0, 1)$. From Lemma~\ref{lem: exponential stability} we have 
\begin{align*}
     & V (k+1, x_{k+1} ) \leq V (k, x_{k}) - c_{3} \| x_{k} \|^2\\
    \leq & 
    \left( 1 - \mll{\frac{c_{3}}{c_{2}}} \right) V (k, x_{k})
    : = \rho^2 V ( k, x_{k} )
\end{align*}
\mll{where $\rho \in [0, 1)$ since $c_3 \leq c_2$ follows from $c_{3} \| x_{k} \|^2 \leq V(k, x_{k})$ by the positivity of $ V (k+1, x_{k+1})$.}
Then,
\begin{align*}
    \| x_{k} \|^2 \leq \frac{1}{ c_{1} } V ( k, x_{k} ) \leq \frac{1}{ c_{1} } 
 \rho^{2k} V (0, x_{0}) \leq \frac{c_{2}}{c_{1}} \rho^{2k} \| x_{0} \|^2.
\end{align*}
We thus have
$
    \| x_{k} \| \leq \sqrt{\frac{c_{2}}{c_{1}}} \rho^{k} \| x_{0} \| : = c \rho^{k},
$
meaning that the sequence $\{ x_{k} \}$ converges R-linearly to $x_{\li{*}} = 0$ with rate $\rho$.

The Q-linear and R-linear convergence can be related via Lyapunov theory.
Specifically, suppose that there exists a positive definite quadratic Lyapunov function $V ( x_{k} - x_{\li{*}} ) = \left( x_{k} - x_{\li{*}}\right)^{\! \top} \! P \left( x_{k} - x_{\li{*}}\right)$ such that $V ( x_{k} - x_{\li{*}}) \leq \rho^{2} V (x_{k-1} - x_{\li{*}})$. Then, the sequence $\{ x_{k} \}$ converges R-linearly to $x_{\li{*}}$ with rate $\rho$. Let us define $T$ such that $P = T^{\top} T$, and a linear transformation $\tilde{x} = T x$. Using similar arguments as above with $c_1 = c_2 = 1$, it follows that the sequence $\{ \tilde{x}_{k} \}$ converges Q-linearly to $\tilde{x}_{\li{*}} := T x_{\li{*}}$ with the same rate $\rho$.

\subsection{Lur'e form of first-order algorithms}\label{sec: Lur'e form of first-order algorithms}
First-order discrete-time algorithms for optimization problem \eqref{eq: constrained convex optimization problem}, including gradient descent and Nesterov's methods, can be reformulated into a Lur'e system, where a discrete-time linear time-invariant (LTI) system is interconnected with a nonlinear operator \cite{lessard2016analysis}. The LTI system is given by
\begin{align}\label{eq: LTI system}
\begin{aligned}
    \xi_{k + 1} =& A \xi_{k} + B u_{k}, ~~ \xi_{0} \li{\in \mathbb{R}^{nd}},\\
    y_{k} =& C \xi_{k} + D u_{k}
\end{aligned}
\end{align}
where \li{$A \in \mathbb{R}^{n d \times n d}$,  $B \in \mathbb{R}^{n d \times d}$,  $C\in \mathbb{R}^{ d \times n d}$, $D\in \mathbb{R}^{ d \times d}$, and} the input $u_{k} \in \mathbb{R}^{d}$ is given by the gradient of the output $y_{k} \in \mathbb{R}^{d}$,
\begin{align}\label{eq: gradient input to u_k}
    u_{k} = \nabla f(y_{k}).
\end{align}
It is assumed that $D = 0$ to avoid an algebraic loop, which is also the case for most algorithms \cite{lessard2016analysis, scherer2023optimization}.
It is worth mentioning that $D$ can be non-zero and the nonlinearity can be more than a gradient nonlinearity in \eqref{eq: gradient input to u_k}, when a composition of operators is involved, such as the mirror descent algorithm \cite{li2025convergence} and projection dynamics \cite{lessard2016analysis, lessard2022analysis}.

According to \cite{scherer2021convex}, such a form of first-order algorithms in \eqref{eq: LTI system} can be rewritten as
\begin{align}\label{eq: state-space system observable form}
\begin{bmatrix}
\begin{array}{c}
\xi_{k+1} \\ \hline y_{k}
\end{array}
\end{bmatrix}
= 
\begin{bmatrix}
\begin{array}{c}
\xi^{(1)}_{k+1} \\ \xi^{(2)}_{k+1} \\ \hline y_{k}
\end{array}
\end{bmatrix}
= 
    \begin{bmatrix}
    \begin{array}{cc|c}
         I_d & 0 & I_d\\
         \mll{A_{21}} & \mll{A_{22}} & 0\\
         \hline
         C_{1} & C_{2} & 0
    \end{array}
    \end{bmatrix}
    \begin{bmatrix}
        \begin{array}{c}
             \xi^{(1)}_{k} \\ \xi^{(2)}_{k} \\ \hline u_{k}
        \end{array}
    \end{bmatrix}
\end{align}
where $\xi^{(1)}_{k} \in \mathbb{R}^{d}$, $\xi^{(2)}_{k} \in \mathbb{R}^{(n-1)d}$, for some $n > 1$.
Moreover, because the strong convexity and Lipschitz continuity parameters $m$ and $L$ are scalars independent of the dimension $d$ for any convex objective function $f(x)$ in \eqref{eq: constrained convex optimization problem}, all the matrices involved in \eqref{eq: state-space system observable form} admit the so-called dimensionality reduction \cite[Sec.~4.2]{lessard2016analysis}, i.e., they can be rewritten as a Kronecker product form $(\cdot) \otimes I_d$.
\li{In this work, delays in the input--output channel are not considered, that is, $ CB \neq 0$. Moreover, we focus on algorithms in which the gradient input acts in a descent direction. The following assumption summarizes the class of algorithms considered in this paper.
\begin{assumption}\label{assumption on CB}
    The algorithmic system in \eqref{eq: state-space system observable form} is of relative degree one, and satisfies $C B = C_1 = c_1 I_d \prec 0$.
\end{assumption}
\begin{remark}
    The condition $CB \prec 0$ implies that the gradient input enters the decision variable in a negative direction, which is satisfied by gradient-descent-type algorithms, including gradient descent, Nesterov's accelerated method, and the triple momentum method \cite{lessard2016analysis,van2017fastest}.
    This assumption is also necessary for the present framework in the sense that the direct projection step on the decision variable (see \eqref{alg Euclidean proj} later) does not even yield a fixed point in general if $CB \succ 0$.
    While cases with $C B \succeq 0$ may be addressed using analogous state-space representations, they are left for future work.
\end{remark}
}

Note that we could perform a linear transformation on system \eqref{eq: state-space system observable form}, given by
\begin{align*}
   \tilde{\xi}_{k} = \begin{bmatrix}
       C_{1} & C_{2} \\
       0 & I
   \end{bmatrix}
   \xi_{k}
\end{align*}
such that the output, \li{which serves as the decision variable}, becomes a part of the state vector, while the input is injected exclusively into this part, that is
\begin{align}\label{eq: state-space form with state output for unconstrained optimization}
\begin{bmatrix}
\begin{array}{c}
\tilde{\xi}_{k+1} \\ \hline y_{k}
\end{array}
\end{bmatrix}
=
\begin{bmatrix}
\begin{array}{c}
y_{k+1} \\ \xi^{(2)}_{k+1} \\ \hline y_{k}
\end{array}
\end{bmatrix}
= &
    \begin{bmatrix}
    \begin{array}{cc|c}
         \li{\tilde{A}_{11}} & \li{\tilde{A}_{12}} & \ml{C_{1}} \\
         \li{\tilde{A}_{21}} & \li{\tilde{A}_{22}} & 0\\
         \hline
         I_d & 0 & 0
    \end{array}
    \end{bmatrix}
    \begin{bmatrix}
        \begin{array}{c}
        y_{k} \\ \xi^{(2)}_{k} \\ \hline u_{k}
        \end{array}
    \end{bmatrix} \nonumber\\
    := &
\begin{bmatrix}
    \begin{array}{c|c}
    \tilde{A} & \tilde{B} \\
         \hline
         \tilde{C} & \tilde{D}
    \end{array}
\end{bmatrix}
    \begin{bmatrix}
        \begin{array}{c}
        y_{k} \\ \xi^{(2)}_{k} \\ \hline u_{k}
        \end{array}
    \end{bmatrix}.
\end{align}
It can be observed \ml{from the previous discussion} that $\li{\tilde{A}_{11}} = \li{\tilde{a}_{11}} I_{d}$, $\ml{C_{1} = c_{1}}  I_{d}$ for some scalars $\tilde{a}_{11}$, and $\ml{c_{1}}$.
\mll{\begin{remark}
    The transformation \li{from \eqref{eq: state-space system observable form} to \eqref{eq: state-space form with state output for unconstrained optimization}} is introduced to address two technical issues: (i)~ensuring that \li{the projection step can be directly applied to the decision variable $y$ while preserving optimality under constraints}, and (ii)~establishing convergence with the same rate bound as in the unconstrained case. In the transformed coordinates, the decision variable $y$ becomes part of the system state, so the Lyapunov function remains quadratic in the state while the projection is applied directly to $y$.
\end{remark}}

We will start from this form to construct projected algorithms and show their convergence in Section~\ref{Main results}.

\subsection{IQC-based analysis}
The analysis of system \eqref{eq: gradient input to u_k}, \eqref{eq: state-space form with state output for unconstrained optimization} can be carried out in the framework of integral quadratic constraints (IQCs) \cite{megretski1997system,jonsson2001lecture}.
This subsection provides a brief introduction to the time-domain IQC-based analysis of Lur'e systems \cite{seiler2014stability}.

Denote by $\hat{u} (z)$, $\hat{y} (z)$ the z-transforms for \li{sequences $\left\{ u_{k}\right\}$, $\left\{ y_{k}\right\}$}, respectively. The IQC in the frequency domain is given by 
\begin{align}\label{eq: frequency domain discrete-time IQC}
    \li{\int_{|z| = 1}
    \begin{bmatrix}
        \hat{y} ( z^{-1} ) \\ \hat{u} (  z^{-1}  )
    \end{bmatrix}^{\top}
    \Pi ( z )
    \begin{bmatrix}
        \hat{y} (z ) \\ \hat{u} ( z )
    \end{bmatrix}
    d z
    \geq 0}
\end{align}
where $\Pi (\li{z})$ is a bounded Hermitian matrix characterizing properties that the input and output pair should satisfy.
Suppose there exists such a bounded Hermitian matrix for \eqref{eq: frequency domain discrete-time IQC}, then it can be factorized as 
\begin{align*}
    \Pi (\li{z}) = \Psi^{\top} (\li{ z^{-1} }) M \Psi ( \li{z} )
\end{align*}
with a constant symmetric matrix $M \in \mathbb{R}^{qd \times qd}$, for some $q \geq 2$. 
%
%
\ml{Note that $\Psi (\li{z})$ above can be viewed as the \li{transfer matrix} of an auxiliary \li{stable} linear system, i.e., }
\begin{align}\label{eq: filter system for nonlinearity}
\Psi: ~
\begin{cases}
    \zeta_{k+1} = A_{\Psi} \zeta_{k} + B_{\Psi}^{y} y_{k} + B_{\Psi}^{u} u_{k}, ~\zeta_0 \in \mathbb{R}^{ p d }\\
    h_{k} = C_{\Psi} \zeta_{k} + D_{\Psi}^{y} y_{k} + D_{\Psi}^{u} u_{k},
    \end{cases}
\end{align}
where $\zeta_k\in\mathbb{R}^{pd}$ is the internal state, with $p\in\mathbb{N}$, and $(\zeta_{\li{*}}, h_{\li{*}})$ is the unique fixed point of the system \li{for any choice of $(y_{\li{*}}, u_{\li{*}})$}.
\ml{Then,} the frequency-domain IQC \eqref{eq: frequency domain discrete-time IQC} can be transformed into the time-domain inequality by the Plancherel theorem \li{\cite{rudin1987real}},
\begin{align}\label{eq: time domain discrete-time IQC}
    \sum_{k = 0}^{\infty} \left( h_{k} - h_{\li{*}} \right)^{\! \top} \! M \left( h_{k} - h_{\li{*}} \right) \geq 0.
\end{align}
\li{In this paper, to obtain a stepwise quadratic Lyapunov function, we focus on the case where the multiplier admits a truncated hard IQC, namely,}
\begin{align}\label{eq: truncated hard IQC}
    \li{\sum_{k = t}^{T} \left( h_{k} - h_{*} \right)^{\! \top} M \left( h_{k} - h_{*} \right) \geq 0, \text{~for all~} T \geq t \geq 0.}
\end{align}
\li{By setting $T = t$, or by lifting the samples over a finite window to represent higher-order multipliers~\cite{van2022absolute}, the truncated hard IQC \eqref{eq: truncated hard IQC} directly yields a pointwise IQC and hence a stepwise quadratic Lyapunov function.}
Combining systems \eqref{eq: state-space form with state output for unconstrained optimization} and \eqref{eq: filter system for nonlinearity} by eliminating $y_{k}$, we have
\begin{align}\label{eq: augmented system with IQC}
\begin{aligned}
    x_{k + 1} 
    = \hat{A} x_{k} + \hat{B} u_{k}\\
    h_{k} = \hat{C} x_{k} + \hat{D} u_{k}
\end{aligned}
\end{align}
where $x_{k} = (\tilde{\xi}_{k}, \zeta_{k})$, and
\begin{subequations}\label{eq: IQC augmented system matrices}
\begin{align}
& \hat{A} = 
\begin{bmatrix}
    \tilde{A} & 0\\
    B_{\Psi}^{y} \tilde{C} & A_{\Psi}
\end{bmatrix}
\in \mathbb{R}^{(n + p) d \times (n+p) d }, \label{eq: A_hat}\\
& \hat{B} = 
\begin{bmatrix}
    \tilde{B} \\ B_{\Psi}^{u} + B_{\Psi}^{y}  \tilde{D} 
\end{bmatrix}
\in \mathbb{R}^{ (n + p)d \times d},\\
& \hat{C} = 
\begin{bmatrix}
    D_{\Psi}^{y} \tilde{C} & C_{\Psi}
\end{bmatrix}
\in \mathbb{R}^{ qd \times ( p + n) d},\\
& \hat{D} = 
    D_{\Psi}^{u} + D_{\Psi}^{y} \tilde{D} \in \mathbb{R}^{qd \times d}.
\end{align}
\end{subequations}
Then, system \eqref{eq: augmented system with IQC} in conjunction with \eqref{eq: time domain discrete-time IQC}, \icl{can be} used to construct dissipation inequalities with \icl{a} corresponding linear matrix inequality (LMI) interpretation, through which stability of the closed-loop system can be deduced \cite{seiler2014stability}.
\icl{In particular,} the output $y_{k}$ of the closed-loop system \eqref{eq: LTI system}, \eqref{eq: gradient input to u_k} converges R-linearly to $y_{\li{*}}$, \ml{which is} the optimal solution to the unconstrained version of problem \eqref{eq: constrained convex optimization problem}, if there exist $\rho \in (0, 1)$, $M$ in \eqref{eq: truncated hard IQC}, and $\mathbf{P} = \mathbf{P}^{\top} \succ 0$
such that 
\begin{align}\label{eq: LMI from IQC}
    \begin{bmatrix}
        \hat{A}^{\top} \mathbf{P} \hat{A} - \rho^2 \mathbf{P} & \hat{A}^{\top} \mathbf{P} \hat{B}\\
        \hat{B}^{\top} \mathbf{P} \hat{A} & \hat{B}^{\top} \mathbf{P} \hat{B}
    \end{bmatrix}
    +
    \begin{bmatrix}
        \hat{C}^{\top} \\ \hat{D}^{\top}
    \end{bmatrix}
    M
    \begin{bmatrix}
        \hat{C} & \hat{D}
    \end{bmatrix}
    \preceq 0.
\end{align}
\li{Let $V(e) = e^{\top} \mathbf{P}e$.} Multiplying \eqref{eq: LMI from IQC} on the left and right by 
$\begin{bmatrix}
     \left( x_{k} - x_{\li{*}} \right)^{\! \top} & \left( u_{k} - u_{\li{*}}\right)^{\!\top}
\end{bmatrix}$
and its transpose, respectively, we obtain
\begin{align}\label{eq: quadratic Lyapunov function}
    V (x_{k+1} \li{- x_{*+1}}) \leq \rho^2 V (x_{k} \li{- x_*}), \quad \forall k \geq 0
\end{align}
\li{for any reference point $(x_{*+1},x_*,h_*, y_*,u_*)$ satisfying \eqref{eq: augmented system with IQC} where $x_{* + 1}$ denotes the next state of $x_*$}, meaning that the quadratic Lyapunov function is exponentially decreasing and linear convergence is thus guaranteed.
\mml{As $\nabla f$ is slope-restricted in $[m, L]$, a rich class of O'Shea–-Zames–-Falb (OZF) multipliers can be employed to generate IQCs \eqref{eq: frequency domain discrete-time IQC}, characterizing the slope-restricted nonlinearity in \eqref{eq: gradient input to u_k}. Subsequently, the LMI in \eqref{eq: LMI from IQC} can be used to obtain tight convergence rate bounds \cite{lessard2016analysis}.}

\ml{In this subsection, we have provided} a brief overview of the connection between IQC-based analysis and quadratic Lyapunov functions, with the latter being the primary focus of this paper.
More details on the IQC analysis can be found in \cite{rantzer1996kalman, jonsson2001lecture,fu2005integral,seiler2014stability,carrasco2019convex}.

\section{Main results}\label{Main results}

\subsection{Pointwise IQCs and Quadratic Lyapunov Function}

\mml{We adopt the following assumption to guarantee the linear convergence for the unconstrained dynamics.}
\begin{assumption}\label{assumption on stepwise quadratic Lyapunov function}
For the unconstrained algorithm \eqref{eq: gradient input to u_k}, 
\eqref{eq: state-space form with state output for unconstrained optimization}, 
there exist $M$ in \eqref{eq: truncated hard IQC}, $\rho \in (0,1)$ and $\mathbf P=\mathbf P^\top\succ 0$ such that 
\eqref{eq: LMI from IQC} is satisfied. 
\end{assumption}
The matrix $\mathbf P$ \li{in Assumption~\ref{assumption on stepwise quadratic Lyapunov function}} can be partitioned as
\begin{align}\label{eq: partition of P}
    \mathbf{P} = \begin{bmatrix}
        \mathbf{P}_{11} & \mathbf{P}_{12}\\
        \mathbf{P}_{12}^{\top} & \mathbf{P}_{22}
    \end{bmatrix}
    = \begin{bmatrix}
        {P}_{11} & {P}_{12}\\
        {P}_{12}^{\top} & {P}_{22}
    \end{bmatrix} \otimes I_{d}, \quad P_{11} \in \mathbb{R}.
    \end{align}
\li{This structure} naturally holds because the matrices in \eqref{eq: state-space form with state output for unconstrained optimization}, and consequently in \eqref{eq: LMI from IQC}, can be expressed in a Kronecker form that is independent of the variable dimension $d$, as discussed in Section~\ref{sec: Lur'e form of first-order algorithms}.

Then, the unconstrained optimization algorithm \eqref{eq: gradient input to u_k}, \eqref{eq: state-space form with state output for unconstrained optimization} is linearly convergent with rate $\rho$, and there exists a quadratic Lyapunov function \li{$V (e) = e^\top \mathbf{P} e$}, such that \li{\eqref{eq: quadratic Lyapunov function} holds}.
\begin{remark}
    \li{Assumption~\ref{assumption on stepwise quadratic Lyapunov function} is closely related to many hard-IQC-based analyses such as those in \cite{lessard2016analysis,michalowsky2021robust}.
    In particular, while hard IQCs are typically written on truncated horizons starting from the initial time (see, e.g., \cite[Lemma~8, Lemma~10]{lessard2016analysis}), the corresponding moving-horizon version \eqref{eq: truncated hard IQC} can be verified directly in these cases. This is not automatic in general, however, especially in the presence of specific initial-condition terms (see, e.g., \cite{scherer2023optimization}).
    The assumption is also well aligned with lifting-based constructions, where samples over a finite time window are lifted, and higher-order multipliers are imposed on the lifted variables, which has been shown to yield tight certificates in practice \cite{van2022absolute}.}
    We also note that recent works have investigated the necessity of OZF multipliers for robust stability of related Lur'e systems \cite{carrasco2016zames,su2023necessity}.
\end{remark}

\subsection{Projected algorithm in the norm induced by the Lyapunov matrix}
This subsection presents \li{the projected algorithm} for the constrained optimization problem \eqref{eq: constrained convex optimization problem}, derived from the state-space formulation in \eqref{eq: state-space form with state output for unconstrained optimization}.

\li{A straightforward projected algorithm is} to perform a projection directly on the output $y_{k}$ to the feasibility set \li{$\Omega$, which gives}
%
\begin{subequations}\label{alg Euclidean proj}
\begin{align}
\begin{bmatrix}
y_{k+\frac{1}{2}} \\ \xi^{(2)}_{k+1}
\end{bmatrix}
= &
\tilde{A}
    \begin{bmatrix}
         y_{k} \\ \xi^{(2)}_{k}
    \end{bmatrix}
    + \tilde{B} \li{\nabla f ( y_{k})} \label{eq: state-space form of projected algorithm}\\
y_{k + 1} = & \Pi_{\Omega} \left(y_{k + \frac{1}{2}} \right) \label{eq: projection of output}
\end{align}
\end{subequations}
\li{where the state} $y_{k + \frac{1}{2}}$ is an intermediate state, 
$(\tilde{A}, \tilde{B})$ are given in \eqref{eq: state-space form with state output for unconstrained optimization}\li{,}
\li{and} the projection operator is $\Pi_{\Omega} (x) = \underset{z \in \Omega}{\operatorname{argmin}} \| z - x \|_{2}^{2}$.

\begin{figure}
    \centering
    \includegraphics[width=1\linewidth]{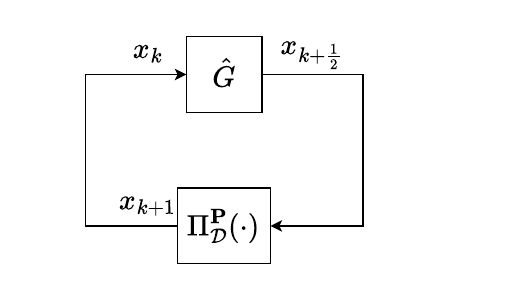}
    \caption{\mll{The iteration of Algorithm~\ref{alg:2}, where the nonlinear operator $\hat{G}$ represents the closed-loop system of \eqref{eq: gradient input to u_k} and \eqref{eq: state space of the projected algorithm with augmented system}, $\mathbf{P}$ is given by Assumption~\ref{assumption on stepwise quadratic Lyapunov function}, and $\mathcal{D} = \Omega \times \mathbb{R}^{(n+p-1)d}$.}}
    \label{fig: projected algorithm 2}
\end{figure}

\li{
We can rewrite \eqref{alg Euclidean proj} as
\[
\tilde{\xi}_{k+1} = \Pi_{\tilde{\Omega}}
\left(
\tilde{A}
\tilde{\xi}_k
+ \tilde{B} \nabla f ( \tilde{C} \tilde{\xi}_{k} )
\right), ~ \tilde{\Omega} = \Omega \times \mathbb{R}^{(n-1)d}.
\]
Thus, \eqref{alg Euclidean proj} corresponds to applying a Euclidean projection to an unconstrained algorithm whose error decays exponentially in the $\mathbf P$-norm. This mismatch does not guarantee the decrease of the error bound at the same rate.
}

\mll{
\li{Motivated by this mismatch,} we propose a projected algorithm obtained by the projection onto the weighted inner product space induced by the Lyapunov matrix $\mathbf P$, which leads to Algorithm~\ref{alg:2}.
}
\begin{algorithm}
\caption{Projected algorithm on the $\mathbf{P}$ norm}\label{alg:2}
\mll{\begin{algorithmic}
\State
\textbf{Initialization}: $\tilde{\xi}_{0} \in \mathbb{R}^{nd}$, $\zeta_{0} 
\in \mathbb{R}^{ p d }$, and $x_{0} = (\tilde{\xi}_{0}, \zeta_{0})$.
\State
\textbf{Iteration}:
\begin{align}\label{eq: state space of the projected algorithm with augmented system}
    x_{k + \frac{1}{2}} 
    =
    \begin{bmatrix}
        y_{k+ \frac{1}{2} } \\ {\xi}_{k + \frac{1}{2}}^{(2)} \\ \zeta_{k + \frac{1}{2}}
    \end{bmatrix}
    = \hat{A} x_{k} + \hat{B} u_{k}
\end{align}
where $x_{k + \frac{1}{2}}$ is an intermediate state and $(\hat{A}, \hat{B}
)$ are defined in \eqref{eq: IQC augmented system matrices}.\\
The input $u_{k}$ is given by \eqref{eq: gradient input to u_k} with $y_k = x_k^{(1)}$, i.e., the first part of the system state $x_{k}$.\\
The update $x_{k + 1}$ is given by
\begin{align}\label{eq: projected algorithm in the P norm}
    x_{k + 1} = \Pi_{\mathcal{D}}^{\mathbf{P}} (x_{k+\frac{1}{2}}) = \underset{z \in \mathcal{D} }{\operatorname{argmin}} \| z - x_{k+\frac{1}{2}} \|_{\mathbf{P}}^{2}
\end{align}
where $\Pi_{\mathcal{D}}^{\mathbf{P}} (\cdot)$ is the projection in the weighted norm with $\mathcal{D} = \Omega \times \mathbb{R}^{(n+p-1)d} \in \mathbb{R}^{(n+p)d}$ and $\mathbf{P}$ given by Assumption~\ref{assumption on stepwise quadratic Lyapunov function}.\\
The output is $y_{k+1} = x^{(1)}_{k+1}$.
\end{algorithmic}}
\end{algorithm}

\mll{The iteration of \eqref{eq: gradient input to u_k}, \eqref{eq: state space of the projected algorithm with augmented system}, and \eqref{eq: projected algorithm in the P norm} in Algorithm~\ref{alg:2} can be represented by the diagram in \li{Fig.~\ref{fig: projected algorithm 2}}.}

\li{
To analyze the fixed point of Algorithm~\ref{alg:2}, we introduce the following lemmas.}
\li{
\begin{lemma}\label{lem: property of the system matrix A}
Consider the system matrix $\tilde A$ in \eqref{eq: state-space form with state output for unconstrained optimization}. Under Assumption~\ref{assumption on CB}, $\det \left( I - \tilde{A}_{22}\right) > 0$, and 
\begin{align}\label{eq: property of the system matrix A}
\tilde{A}_{11} + \tilde{A}_{12} \left( I - \tilde{A}_{22} \right)^{-1} \tilde{A}_{21} = I_d.
\end{align}
\end{lemma}
\begin{proof}
    See Appendix~\ref{appendix proof of Lemma property of the system matrix A}.
\end{proof}
It can be observed that $\hat{A}$ in \eqref{eq: IQC augmented system matrices} also satisfies similar properties given by Lemma~\ref{lem: property of the system matrix A} since $A_{\Psi}$ is Schur stable.
}

\li{
\begin{lemma}\label{lem: Schur of transformed A_22}
Suppose matrix $A$ is Schur stable and there exists a positive definite $P \succ 0$ such that 
\begin{align}
    A ^\top {P} A \preceq \rho^2 {P} 
\end{align}
for some $\rho \in [0, 1)$. 
Partition 
$A = \begin{bmatrix}
    A_{11} & A_{12}\\ A_{21} & A_{22}
\end{bmatrix}$ and $P = \begin{bmatrix}
    P_{11} & P_{12}\\ P_{12}^\top & P_{22}
\end{bmatrix}$ with compatible dimensions.
Then, the matrices
$A_{11} - A_{12} P_{22}^{-1} P_{12}^\top$
and
${A}_{22} + {P}_{22}^{-1} {P}_{12}^\top {A}_{12}$
are also Schur stable.
\end{lemma}
\begin{proof}
    See Appendix~\ref{appendix proof of Lemma Schur of transformed A_22}.
\end{proof}
}
\mll{
We show by the following theorem that the fixed-point of \eqref{eq: gradient input to u_k}, \eqref{eq: state space of the projected algorithm with augmented system}, \eqref{eq: projected algorithm in the P norm} in Algorithm~\ref{alg:2} provides the same optimal solution to the constrained problem.
\begin{theorem}\label{thm: fixed point}
    \li{Under Assumptions~\ref{assumption on CB} and \ref{assumption on stepwise quadratic Lyapunov function},}
    there exists a unique fixed point $(u_{\li{*}}, y_{\li{*}}, x_{\li{*}})$ to the dynamics \eqref{eq: gradient input to u_k}, \eqref{eq: state space of the projected algorithm with augmented system}, \eqref{eq: projected algorithm in the P norm}
    such that $y_{\li{*}}$ is the optimal solution to problem \eqref{eq: constrained convex optimization problem}.
\end{theorem}
}
\begin{proof}
\mll{
    First, note that $y$ is the first part of the system state $x$,
    we can partition the state as $x = (x^{(1)}, x^{(2)})$, with $x^{(1)} = y = \tilde{\xi}^{(1)}$ and $x^{(2)} = (\xi^{(2)}, \zeta)$.
    We show that the projection effect on $y$ in \eqref{eq: projected algorithm in the P norm} is equivalent to that in \eqref{eq: projection of output}. Specifically, from \eqref{eq: projected algorithm in the P norm}, we have
    \begin{align*}
        x_{k+1} = & \underset{z\in \mathbb{R}^{(n+p)d}}{\operatorname{argmin}} \left\| z - x_{k + \frac{1}{2}} \right\|_{\mathbf{P}}^{2}\\
        = & \underset{
                \substack{
                    z^{(1)} \in \Omega,\\
                    z^{(2)} \in \mathbb{R}^{(n+p-1)d}
                    }
            }{\operatorname{argmin}} \left\{ \left( z^{(1)} - x^{(1)}_{k + \frac{1}{2}} \right)^{\!\top} \mathbf{P}_{11} \left( z^{(1)} - x^{(1)}_{k + \frac{1}{2}} \right) \right.\\[-10pt]
        & \qquad\qquad\quad + 2 \left( z^{(1)} - x^{(1)}_{k + \frac{1}{2}} \right)^{\!\top} \mathbf{P}_{12} \left( z^{(2)} - x^{(2)}_{k + \frac{1}{2}} \right) \\
        & \qquad\qquad\quad + \left. \left( z^{(2)} - x^{(2)}_{k + \frac{1}{2}} \right)^{\!\top} \mathbf{P}_{22} \left( z^{(2)} - x^{(2)}_{k + \frac{1}{2}} \right) \right\}
    \end{align*}
    where $z = \left(z^{(1)}, z^{(2)}\right)$. The partial derivative with respect to $z^{(2)}$ is zero as it is unconstrained, then
    \begin{align*}
        x^{(2)}_{k + 1}
        = & \underset{ z^{(2)} \in \mathbb{R}^{(n+p-1)d}  }{\operatorname{argmin}} \left\{
         2 \left( z^{(1)} - x^{(1)}_{k + \frac{1}{2}} \right)^{\!\top} \mathbf{P}_{12} \left( z^{(2)} - x^{(2)}_{k + \frac{1}{2}} \right) \right. \nonumber\\
         & \qquad\qquad\quad  + \left. \left( z^{(2)} - x^{(2)}_{k + \frac{1}{2}} \right)^{\!\top} \mathbf{P}_{22} \left( z^{(2)} - x^{(2)}_{k + \frac{1}{2}} \right) \right\} \nonumber\\
        = & x^{(2)}_{k+\frac{1}{2}} - \mathbf{P}_{22}^{-1} \mathbf{P}_{12}^{\top} \left( z^{(1)} - x^{(1)}_{k + \frac{1}{2}}\right).
    \end{align*}
    Substituting $z^{(2)}$ back into the projection, we have 
    \begin{align*}
        & x^{(1)}_{k + 1} \\
        = & \underset{z^{(1)} \in \Omega}{\operatorname{argmin}} \left( z^{(1)} - x^{(1)}_{k + \frac{1}{2}}\right)^{\!\top} \! \left( \mathbf{P}_{11} - \mathbf{P}_{12} \mathbf{P}_{22}^{-1} \mathbf{P}_{12}^{\top} \right) \! \left( z^{(1)} - x^{(1)}_{k + \frac{1}{2}}\right)\\
        = & \underset{z^{(1)} \in \Omega}{\operatorname{argmin}} \left\| z^{(1)} - x^{(1)}_{k + \frac{1}{2}}\right\|^{2}_{2}\\
        =& \Pi_{\Omega} \left( x^{(1)}_{k+\frac{1}{2}} \right)
    \end{align*}
    where the second equality 
    \li{holds in that} the Schur complement $\left( \mathbf{P}_{11} - \mathbf{P}_{12} \mathbf{P}_{22}^{-1} \mathbf{P}_{12}^{\top} \right)$ is positive definite and inherits a Kronecker structure $(\cdot) \otimes I_{d}$ of a scalar.
    Then, the projection effect on $y$ in \eqref{eq: projected algorithm in the P norm} is equivalent to that in \eqref{eq: projection of output}.}
    \mll{Next, let us analyze the fixed point of the dynamics. We partition the system matrix $\hat{A}$ as
    \begin{align*}
        \hat{A} = \begin{bmatrix}
            \hat{A}_{11} & \hat{A}_{12}\\
            \hat{A}_{21} & \hat{A}_{22}
        \end{bmatrix}
    \end{align*}
    where $\hat{A}_{11} \in \mathbb{R}^{d \times d}$ and $\hat{A}_{22} \in \mathbb{R}^{(n+p-1)d \times (n+p-1)d}$.
    The fixed point $\left( y_{\li{*}}, x^{(1)}_{\li{*},\frac{1}{2}}, x^{(2)}_{\li{*}} \right)$ of the projected algorithm should satisfy
    \begin{subequations}\label{eq: fixed point of alg 2}
    \begin{align}
     & x^{(1)}_{\li{*},\frac{1}{2}} = \hat{A}_{11} y_{\li{*}} + \hat{A}_{12} x^{(2)}_{\li{*}} + c_1 \nabla f(y_{\li{*}}) \label{eq: first block of equilibrium}\\
     & \Pi_{\Omega} \left( x^{(1)}_{\li{*},\frac{1}{2}} \right) = y_{\li{*}} \label{eq: fixed point projection}\\
     & x^{(2)}_{\li{*}} = \hat{A}_{21} y_{\li{*}} + \hat{A}_{22} x^{(2)}_{\li{*}} + \li{\hat{B}_2 \nabla f (y_*)} - \mathbf{P}_{22}^{-1} \mathbf{P}_{12}^{\top} \left( y_{\li{*}} - x^{(1)}_{\li{*},\frac{1}{2} }\right) \label{eq: second block of equilibrium}
    \end{align}
    \end{subequations}}\li{
where $\hat{B}_2$ is the associated partitioned block of $\hat{B}$.
As $I - \hat{A}_{22}$ is nonsingular by Lemma~\ref{lem: property of the system matrix A}, $x_{*}^{(2)}$ is then given by
\begin{align*}
\begin{aligned}
x_*^{(2)}
= \big(I - \hat{A}_{22} \big)^{-1} & \Big(
\hat{A}_{21} y_* + \hat{B}_2 \nabla f (y_*) \\
&\qquad
- \mathbf{P}_{22}^{-1} \mathbf{P}_{12}^{\top}
\left(y_* - x_{*,\frac12}^{(1)} \right)
\Big).
\end{aligned}
\end{align*}}
\li{Substituting it into \eqref{eq: first block of equilibrium} yields
\begin{align*}
& \left(\hat A_{11}  + \hat A_{12} \big( I - \hat{A}_{22} \big)^{-1} \hat{A}_{21} \right) y_* - x_{*, \frac12}^{(1)}\\
& + \hat A_{12} \big( I - \hat{A}_{22}\big)^{-1} \hat{B}_2 \nabla f (y_*)
+ c_1\nabla f(y_{\li{*}} )\\
& - \hat A_{12} \big( I - \hat{A}_{22} \big)^{-1} \mathbf{P}_{22}^{-1} \mathbf{P}_{12}^{\top} \left(y_* - x_{*,\frac12}^{(1)} \right)\\
= & \Big( I - \hat A_{12} \big( I - \hat{A}_{22}\big)^{-1} \mathbf{P}_{22}^{-1} \mathbf{P}_{12}^{\top}  \Big)  \left( y_* - x_{*,\frac12}^{(1)} \right) + c_1 \nabla f (y_*)\\
:= & \eta \left( y_* - x_{*,\frac12}^{(1)} \right)  + c_1  \nabla f (y_*) = 0
\end{align*}
where the first equality follows from Lemma~\ref{lem: property of the system matrix A} and the readily verifiable condition $\hat A_{12} \big( I - \hat{A}_{22}\big)^{-1} \hat{B}_2 = 0$, the second equality follows from the Kronecker structure of the matrices.
By the fixed point condition of the projection in \eqref{eq: fixed point projection}, that is, 
\(
x^{(1)}_{\li{*},\frac{1}{2}} - y_{\li{*}} \in N_\Omega(y_{\li{*}}),
\) we can obtain 
\begin{align}\label{eq: fix point with indefinite index}
    c_1 \nabla f (y_{\li{*}}) \in \eta N_{\Omega}\left( y_{\li{*}} \right).
\end{align}
To identify the sign of $\eta$, we first observe that $\left(\hat{A}+ m \hat{B} \begin{bmatrix}
    I_d & 0
\end{bmatrix}\right)$ is Schur stable, since the associated closed-loop system matrix $\tilde{A}+ m \tilde{B}\tilde{C}$ is Schur stable \cite{scherer2021convex}, and the same $\mathbf{P}$ is a Lyapunov matrix for this matrix because it corresponds to the quadratic objective $f (y) = \frac{m}{2} \| y \|^2$, which is included in the considered function class.
Next, we have
\begin{align*}
    & \det \left(I_d - \hat{A}_{12} \left( I - \hat{A}_{22} \right)^{-1} \mathbf{P}_{22}^{-1} \mathbf{P}_{12}^{\top} \right)\\
    = & \det \left(I - \mathbf{P}_{22}^{-1} \mathbf{P}_{12}^{\top} \hat{A}_{12} \left( I - \hat{A}_{22} \right)^{-1} \right)\\
    = & \det \left(  I - \hat{A}_{22} - \mathbf{P}_{22}^{-1} \mathbf{P}_{12}^{\top} \hat{A}_{12} \right) \det \left( I - \hat{A}_{22} \right)^{-1} > 0
\end{align*}
where the first equality follows from Sylvester's determinant theorem \cite[Fact 2.17.4]{bernstein2009matrix}, and the inequality holds as $\det \left(I - \hat{A}_{22} \right) > 0$ by applying Lemma~\ref{lem: property of the system matrix A} to $\hat{A}$, and $\hat{A}_{22} + \mathbf{P}_{22}^{-1} \mathbf{P}_{12}^\top \hat{A}_{12}$ is Schur stable by Lemma~\ref{lem: Schur of transformed A_22} to $\left( \hat{A} + m \hat{B} \begin{bmatrix}
    I_d & 0
\end{bmatrix} \right)$.
Therefore, $\eta > 0$. Moreover, as $c_1 < 0$ by Assumption~\ref{assumption on CB}, condition \eqref{eq: fix point with indefinite index} satisfies the optimality condition for the constrained problem \eqref{eq: constrained convex optimization problem}.} The uniqueness of \li{$y_*$} follows from the strong convexity of $f$ and the uniqueness of the projection onto a closed convex set \cite{ruszczynski2011nonlinear} \li{and $x_*^{(2)}$ is then uniquely determined by \eqref{eq: second block of equilibrium}}.
\end{proof}
\mll{
\begin{remark}
    \li{While algorithms of the form \eqref{alg Euclidean proj} use a direct projection step, the mismatch between the convergence norm and the Euclidean norm may prevent the corresponding convergence rate bounds from being retained.} Algorithm~\ref{alg:2} incorporates augmented states derived from the IQC analysis and introduces a shift in the second state. Such a shift may act like a momentum term, ensuring the same convergence rate bound as the unconstrained case.
    Notice that the update of $x^{(1)}_{k}$ does not \li{depend} on the augmented state $\zeta_{k}$, as shown in \eqref{eq: A_hat}. Thus, in practice, we only need to update $\xi^{(2)}_{k}$ as part of $x^{(2)}_{k}$, with knowledge of the Lyapunov matrix $\mathbf{P}$.
    \li{However, the drawback is that its construction relies on IQCs and on the numerical computation of the Lyapunov matrix \(\mathbf{P}\) (see the example in Section~\ref{subsection: Projected triple momentum algorithm}), which may become numerically involved for ill-conditioned problems. Future work may aim to relax these requirements.}
\end{remark}
}

\subsection{\mll{Projection under \li{linear} transformation}}
Let us review that the operator $\mll{T}^{-1} \Pi_{\Omega} (\mll{T} \cdot )$ with an arbitrary invertible linear transformation $T$ does not preserve the non-expansiveness in general, that is, for any $x$, $z$,
\begin{align}\label{eq: norm inequality under general T}
     & \left\| \mll{T^{-1}} \Pi_{\Omega} (\mll{T} x ) -  \mll{T^{-1}} \Pi_{\Omega} (\mll{T} z ) \right\|_2  \leq \| \mll{T^{-1}} \| \cdot \| \mll{T}\| \cdot \left\| x - z \right\|_2.
\end{align}
However, it is still a projection with a weighted norm, as shown by the following lemma.
\begin{lemma}\label{lem: projection under general transformation}
    Consider the projection operator $\Pi_{\Omega} (\cdot)$ \li{onto} a closed convex set $\Omega \subseteq \mathbb{R}^{d}$, and an arbitrary invertible matrix $T$, then $ \mll{T^{-1}} \Pi_{\Omega} (\mll{T}\cdot )$ is a projection onto $\mll{T^{-1}} \Omega$ with \li{respect to} the \li{weighted} norm 
    $\| \cdot \|_{\mll{T^\top T}}$.
\end{lemma}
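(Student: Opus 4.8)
The plan is to show that the composite operator $T\Pi_\Omega(T^{-1}\cdot)$ coincides, pointwise, with the weighted projection $\Pi_{T\Omega}^{V}$ for the appropriate weight $V$, by transporting the optimization problem that \emph{defines} the projection through the invertible change of variables $w=Ty$. First I would fix an arbitrary point $z$ and set $x:=T^{-1}z$, so that by definition $\Pi_\Omega(x)=\operatorname{argmin}_{y\in\Omega}\|x-y\|_2^2$, which is a single well-defined point because $\Omega$ is closed and convex. Applying $T$ then gives the value of the left-hand operator at $z$, namely $T\,\Pi_\Omega(T^{-1}z)$.

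Next I would rewrite the weighted projection onto the image set. Since $T$ is invertible, $y\mapsto w=Ty$ is a bijection from $\Omega$ onto $T\Omega$, and $T\Omega$ is again closed and convex, so for any $V\succ0$ the minimizer $\Pi_{T\Omega}^{V}(z)=\operatorname{argmin}_{w\in T\Omega}(z-w)^{\top}V(z-w)$ exists and is unique. The core computation is to substitute $w=Ty$ and $z=Tx$ into the weighted objective and observe that $(z-w)^{\top}V(z-w)=(x-y)^{\top}\bigl(T^{\top}VT\bigr)(x-y)$; the two minimization problems are therefore identical (under the bijection $w=Ty$) precisely when the pullback identity $T^{\top}VT=I$ holds, which forces $V=(TT^{\top})^{-1}$. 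Identifying and verifying this weight is the one genuinely non-mechanical step, and the place I would be most careful, since the weight is dictated by how $T$ acts on both arguments of the distance, not by $T$ alone.

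Finally, I would connect this weight to the statement. For the symmetric square root $T=\mathbf{P}^{1/2}$ of the Lyapunov matrix $\mathbf{P}\succ0$ — which is the case in which the lemma is subsequently invoked — one has $T^{\top}T=TT^{\top}=\mathbf{P}$, so $(TT^{\top})^{-1}=(T^{\top}T)^{-1}=\mathbf{P}^{-1}$ and the pullback weight is exactly the claimed $\|\cdot\|_{(T^{\top}T)^{-1}}$. With the two objectives shown to agree, uniqueness of both minimizers (guaranteed by closedness and convexity of $\Omega$, hence of $T\Omega$, together with strict convexity of the weighted squared distance) forces $T\,\Pi_\Omega(x)=\Pi_{T\Omega}^{V}(z)$; since $z$ was arbitrary and $x=T^{-1}z$, this establishes the operator identity $T\,\Pi_\Omega(T^{-1}\cdot)=\Pi_{T\Omega}^{(T^{\top}T)^{-1}}(\cdot)$ on all of $\mathbb{R}^{d}$, completing the proof.
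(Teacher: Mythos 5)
Your proof is correct and follows essentially the same route as the paper's: both transport the argmin defining $\Pi_{\Omega}$ through the bijection $w=Ty$ and read off the induced weighted norm on $T\Omega$. The one substantive difference is the weight you identify, and here your version is the more careful one. Your pullback identity $T^{\top}VT=I$ gives $V=(TT^{\top})^{-1}$, which is indeed the correct weight for a general invertible $T$, since $\|T^{-1}(w-z)\|_2^2=(w-z)^{\top}(TT^{\top})^{-1}(w-z)$; the paper's statement and the last step of its proof instead write $(T^{\top}T)^{-1}$, which coincides with $(TT^{\top})^{-1}$ only when $T$ is normal. You correctly observe that the discrepancy vanishes in the case that matters downstream, where $T$ is a symmetric square root of $\mathbf{P}$ so that $T^{\top}T=TT^{\top}=\mathbf{P}$; note, however, that the paper only requires a factorization $\mathbf{P}=\mathbf{T}^{\top}\mathbf{T}$ without insisting on symmetry of $\mathbf{T}$, so for a non-symmetric factor the induced weight would be $(\mathbf{T}\mathbf{T}^{\top})^{-1}$ rather than $\mathbf{P}^{-1}$. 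This does not affect the main convergence theorem (which relies on the Euclidean non-expansiveness established separately for the structured $\mathbf{T}$), but as a statement about arbitrary invertible $T$ the lemma should read $\|\cdot\|_{(TT^{\top})^{-1}}$, exactly as your derivation produces.
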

\begin{proof}
    First, a projection should be idempotent, i.e., the composition $\left( \mll{T^{-1}} \Pi_{\Omega} (\mll{T} \cdot ) \right) \circ \left( \mll{T^{-1}} \Pi_{\Omega} (\mll{T} \cdot ) \right) = \mll{T}^{-1} \Pi_{\Omega} (\mll{T} \cdot )$. Next, we have
    \begin{align*}
        \Pi_{\Omega} ( \mll{T} {x}) = & \underset{\li{z} \in \Omega}{\operatorname{argmin}} \| z - \mll{T} x \|_2^2\\
        = & \underset{z \in \Omega, \tilde{z} = \mll{T^{-1}} z}{\operatorname{argmin}} \| \mll{T} \tilde{z} - \mll{T} {x} \|_2^2\\
        = & T \cdot \underset{\tilde{z} \in \mll{T^{-1}} \Omega}{\operatorname{argmin}} \| \mll{T} \left(\tilde{z} - {x} \right)\|_2^2\\
        = & T \cdot \underset{\tilde{z} \in \mll{T^{-1}} \Omega }{\operatorname{argmin}} \|  \tilde{z} - {x} \|_{\mll{ T^\top T}}^2\\
        := & T \cdot \Pi_{ \mll{T^{-1}} \Omega}^{T^\top T} (x)
    \end{align*}
    where $\mll{T^{-1}} \Omega  = \left\{ \mll{T^{-1}} z : z \in \Omega\right\}$, and the third equality follows from the fact that the change of variable $\tilde{z} = T^{-1} z$ is bijective.
    Therefore, we have 
    \begin{align*}
    \mll{T^{-1}} \Pi_{\Omega} (\mll{T} x) = \underset{\tilde{z} \in \mll{T^{-1}} \Omega }{\operatorname{argmin}} \| \tilde{z} - x \|_{ T^{\top} T }^2 = \Pi_{ \mll{T^{-1}} \Omega}^{\mll{T^{\top} T} } (x),
    \end{align*}
    which is a projection onto the transformed set $\mll{T^{-1}} \Omega $ with respect to the weighted norm $\|\cdot \|_{\mll{T^{\top} T} }$.
\end{proof}

\subsection{Convergence analysis}

\mll{The convergence of Algorithm \ref{alg:2} is established in the following theorem.}
\begin{theorem}
    \li{Suppose Assumptions~\ref{assumption on CB} and \ref{assumption on stepwise quadratic Lyapunov function} hold,} \ml{so that the unconstrained algorithm given by \eqref{eq: gradient input to u_k}, \eqref{eq: state-space form with state output for unconstrained optimization} is R-\li{linearly} convergent with rate $\rho$}. Then the corresponding
    \mll{Algorithm~\ref{alg:2}}
    is also R-\li{linearly} convergent with the same rate $\rho$. 
\end{theorem}
\begin{proof}
\ml{Let us first look at the unconstrained algorithm \eqref{eq: gradient input to u_k}, \eqref{eq: state-space form with state output for unconstrained optimization}.} Let the quadratic Lyapunov function be $V(\li{e}) = \li{e}^{\top} \mathbf{P}\li{e}$, with $\mathbf{P} = \mathbf{P}^{\top} \succ 0$. Then, \ml{by Assumption~\ref{assumption on stepwise quadratic Lyapunov function} and \eqref{eq: LMI from IQC},} we have \eqref{eq: quadratic Lyapunov function}, \ml{meaning that the sequence $\{x_{k} \}$ converges R-linearly to $x_{\li{*}}$ with rate $\rho$}.
We then introduce a linear transformation $\tilde{x} = \mathbf{T} x$ where $\mathbf{T}$ is an invertible matrix satisfying $\mathbf{T}^{\top} \mathbf{T} = \mathbf{P}$.
By our discussion in Section~\ref{sec: Exponential stability and linear convergence}, the transformed sequence $\tilde{x}_{k} = T x_{k}$ converges Q-linearly to $\tilde{x}_{\li{*}} = \mathbf{T} x_{\li{*}}$ with the same rate, that is, $\| \tilde{x}_{k} - \tilde{x}_{\li{*}} \|_2 \leq \rho \|\tilde{x}_{k - 1} - \tilde{x}_{\li{*}} \|_2$.

\ml{Next, let us look at Algorithm~\ref{alg:2} characterized by 
\eqref{eq: gradient input to u_k}, \eqref{eq: state space of the projected algorithm with augmented system}, \eqref{eq: projected algorithm in the P norm}. Now, \li{$x_{k + \frac{1}{2}}$ represents the unconstrained state update, that is, $x_{k + \frac12} = \hat{A} {x}_{k} + \hat{B} \nabla f ( y_k)$.
Meanwhile, $x_*$ represents the fixed point of the projected dynamics, with an abuse of notation.}
\li{By Theorem~\ref{thm: fixed point}, the fixed point $(u_{\li{*}}, y_{\li{*}}, x_{\li{*}})$  is unique with $y_*$ satisfying \eqref{eq: optimal solution to the constrained problem}, proving optimality.
Then, we have
$x_{*, \frac{1}{2}} = x_{* + 1} = \hat{A} {x}_* + \hat{B} \nabla f ( {y}_*)$, and $
    x_* = \Pi_{\mathcal{D}}^{\mathbf{P}} \left(x_{*, \frac{1}{2}} \right)$.}
We perform the same linear transformation to $x_{k+\frac{1}{2}}$, that is, $\tilde{x}_{\li{k+\frac{1}{2}}} = \mathbf{T} x_{k+\frac{1}{2}}$.}
The control diagram of this transformed dynamics is depicted in \li{Fig~\ref{fig: augmented system transformed}}.
\begin{figure}
    \centering
    \includegraphics[width=1\linewidth]{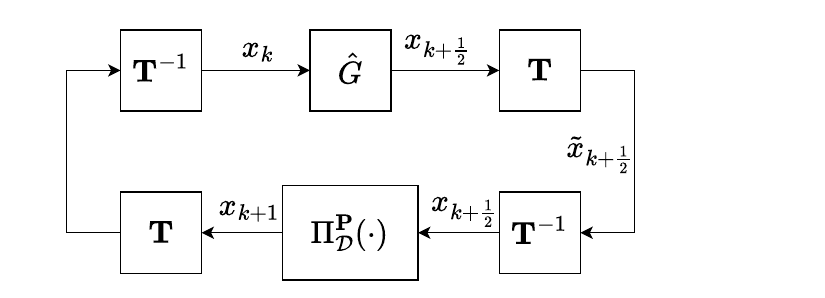}
    \caption{The system diagram under linear transformation, where $\tilde{G}$ represents the closed-loop system of \eqref{eq: augmented system with IQC} and \eqref{eq: gradient input to u_k}, $\Pi_{\mathcal{D}}^{\mathbf{P}} (\cdot)$ is a projection onto the set $\mathcal{D}:= \Omega \times \mathbb{R}^{(p + n - 1)d}$ with the induced norm $\mathbf{P} = \mathbf{T}^{\top} \mathbf{T}$.}
    \label{fig: augmented system transformed}
\end{figure}
\mll{As $\mathbf{P} = \mathbf{T}^{\top} \mathbf{T}$, we have by Lemma~\ref{lem: projection under general transformation} \li{that $\Pi_{\mathcal{D}}^{\mathbf{P}} ( x ) = \mathbf{T}^{-1} \Pi_{ \mathbf{T}\mathcal{D}} (  \mathbf{T} x )$, and thus}
\begin{align*}
    \mathbf{T} \Pi_{\mathcal{D}}^{\mathbf{P}} ( \mathbf{T}^{-1} \tilde{x})
    = \Pi_{\mathbf{T} \mathcal{D}} ( \tilde{x} ).
\end{align*}}
\mll{The transformed operator $\mathbf{T} \Pi_{\mathcal{D}}^{\mathbf{P}} ( \mathbf{T}^{-1} \tilde{x}) = \Pi_{\mathbf{T} \mathcal{D}} (\tilde{x}) $ is thus a projection operator with \li{respect to} the original norm $\| \cdot \|_{2}$. }
Then, it is non-expansive in $\| \cdot \|_{2}$, that is, for any $\tilde{x}$, $\tilde{z}$, 
\begin{align}\label{eq:non-expansiveness of the transformed projection}
    \left\| \mathbf{T} \Pi_{\mathcal{D}}^{\mathbf{P}} ( \mathbf{T}^{-1} \tilde{x})  -  \mathbf{T} \Pi_{\mathcal{D}}^{\mathbf{P}} ( \mathbf{T}^{-1} \tilde{z}) \right\|^2_2 \leq \| \tilde{x} - \tilde{z} \|_2^2.
\end{align}
Therefore, the composition of 
the 
\li{linearly convergent algorithmic operator}
and the non-expansive \li{projection} under the same norm yields the same linear convergence rate. Specifically,
\begin{align*}
    & \left\| \tilde{x}_{k+1} - \tilde{x}_{\li{*}} \right\|_2^2\\
    = & \left\| \mathbf{T} \Pi_{\mathcal{D}}^{\mathbf{P}} ( \mathbf{T}^{-1} \tilde{x}_{k+ \frac{1}{2}}) - \mathbf{T} \Pi_{\mathcal{D}}^{\mathbf{P}} ( \mathbf{T}^{-1} \tilde{x}_{\li{*, \frac{1}{2} }} )  \right\|_2^2\\
    = & \left\| \Pi_{\mathbf{T} \mathcal{D}} ( \tilde{x}_{k+ \frac{1}{2}} )  - \Pi_{\mathbf{T} \mathcal{D}} ( \tilde{x}_{\li{*, \frac{1}{2}}} )\right\|_2^2
    \\
    \leq & \| \tilde{x}_{k + \frac{1}{2}} - \tilde{x}_{\li{*, \frac{1}{2}}} \|_2^2  \leq \rho^2 \| \tilde{x}_{k} - \tilde{x}_{\li{*}} \|_2^2 \leq \ldots\\
    \leq & \rho^{2 (k+1)} \| \tilde{x}_{0} - \tilde{x}_{\li{*}} \|_2^2.
\end{align*}
\li{where $\tilde{x}_{*,\frac{1}{2}} := \mathbf{T} ({x}_{*, \frac{1}{2} }) = \mathbf{T} \left( \hat{A} {x}_* + \hat{B} \nabla f ( y_*) \right)$ and the second inequality follows because the considered IQCs and \eqref{eq: LMI from IQC} hold for any reference point $(y_*, u_*)$.}
As a result, we also have the same convergence rate of the state in the original coordinate but multiplied by a constant, i.e., \mml{it is R-\li{linearly} convergent} with $\| {x}_{k} -  {x}_{\li{*}} \|_2 \leq c \rho^{k} \| {x}_{0} -  {x}_{\li{*}} \|_2$, for some $c > 0$.
\end{proof}
\mll{The convergence analysis relies on Lyapunov theory, the non-expansiveness of projection, and
\li{its composition with the linearly convergent algorithmic operator.}
The quadratic Lyapunov function guarantees R-linear convergence of the unconstrained dynamics, while the weighted projection in Algorithm~\ref{alg:2} is non-expansive only in the weighted norm. Under a linear transformation associated with the Lyapunov matrix, the unconstrained dynamics exhibit Q-linear convergence, and the projection becomes non-expansive in the Euclidean norm. These properties together
establish the same rate bound as that of the unconstrained counterpart.}

\section{Examples}\label{Examples}
\subsection{Projected gradient descent algorithm}
It is well-known that the gradient descent algorithm
\begin{align*}
    x_{k+1} = x_{k} - \li{\alpha} \nabla f(x_{k})
\end{align*}
has the tight convergence rate \mml{bound} $\rho= \frac{L - m}{L + m}$ at $\li{\alpha} = \frac{2}{L + m}$.
The system has $\mathbf{P} = I$ as its Lyapunov function matrix, and thus $\mathbf{P} = \mathbf{T} = I$ in \li{Fig.~\ref{fig: projected algorithm 2}} and \li{Fig.~\ref{fig: augmented system transformed}}, respectively. It follows 
that the projected gradient descent algorithm
\begin{align*}
    x_{k+1} = \Pi_{\Omega} \left( x_{k} - \li{\alpha} \nabla f(x_{k}) \right)
\end{align*}
converges linearly to the optimal solution of \eqref{eq: constrained convex optimization problem} with the same rate $\rho$ using the same stepsize $\li{\alpha}$.

\mll{
\subsection{Projected triple momentum \li{method}}\label{subsection: Projected triple momentum algorithm}
Consider the constrained problem
\begin{align*}
    \min_{y \in \Omega } \left( f(y) = \frac{1}{2} y^{\top} F y + b^{\top} y \right), ~
F =
\begin{bmatrix}
100 & -1\\ -1 & 1
\end{bmatrix},~
b =
\begin{bmatrix}
1 \\ 10
\end{bmatrix}.
\end{align*}
When $\Omega = \mathbb{R}^{2}$, the optimal solution and optimal value are given by
\begin{align*}
y^{\textup{opt}} = - F^{-1} b =
\begin{bmatrix}
-0.1111 \\ -10.1111
\end{bmatrix},
 \quad f^{\textup{opt}} = -50.6111.
\end{align*}
Let us now consider the ellipse constraint 
\[
\Omega = \left\{ y \in \mathbb{R}^{2}: \left(y^{(1)} \right)^2 + 2 \left( y^{(2)}\right)^2  \leq 5 \right\}.
\]
The optimal solution and optimal value are 
\begin{align*}
y^{\textup{opt}}_{\Omega} = \begin{bmatrix}
-0.0251 \\ -1.5810
\end{bmatrix},
\quad f^{\textup{opt}}_{\Omega} = -14.5938
\end{align*}
obtained by solving the constrained problem \li{analytically via the KKT condition\cite{ruszczynski2011nonlinear}}.}
\mll{
The triple momentum method \li{(TMM)} is given by
\begin{align*}
    \xi_{k+1} = & (1 + \beta) \xi_{k} - \beta \xi_{k-1} - \alpha \nabla f (y_k)\\
    y_{k} = & (1 + \gamma) \xi_{k} - \gamma \xi_{k-1}
\end{align*}
which can be written as
\begin{align*}
    \begin{bmatrix}
    \begin{array}{c}
        \xi^{(1)}_{k+1} \\ \xi^{(2)}_{k+1} \\ \hline y_{k}
        \end{array}
    \end{bmatrix}
    = 
    \begin{bmatrix}
    \begin{array}{cc|c}
       \left( 1 + \beta \right) I_{d} & -\beta I_{d} & - \alpha I_{d}\\
       I_{d} & 0_{d} & 0_{d} \\
       \hline
       (1 + \gamma) I_{d} & -\gamma I_{d} & 0_{d}
       \end{array}
    \end{bmatrix}
    \begin{bmatrix}
    \begin{array}{c}
        \xi^{(1)}_{k} \\ \xi^{(2)}_{k} \\ \hline u_{k}
    \end{array}
    \end{bmatrix}
\end{align*}
with $u_{k} = \nabla f(y_{k})$. 
We adopt the parameters $(\alpha, \beta, \gamma) = \left(\frac{1 + \rho}{L}, \frac{\rho^2}{2 - \rho}, \frac{\rho^2}{(1+ \rho) (2 - \rho)} \right)$ from \cite{van2017fastest}, where $m = 0.9899$, $L = 100.0101$, and the convergence rate is $\rho = 1 - \sqrt{m/L} = 0.9005$ for this example.}
\mll{The state-space system can be transformed further to include output as a part of the state vector that directly receives the input exclusively, that is,
\begin{align*}
    &
    \begin{bmatrix}
    \begin{array}{c}
        y_{k+1} \\ \xi^{(2)}_{k+1} \\ \hline y_{k}
        \end{array}
    \end{bmatrix} = 
    \nonumber \\
    &
    \hspace{-1mm}
    \begin{bmatrix}
    \begin{array}{cc|c}
       \frac{ \left( (\beta + 1)(\gamma + 1) - \gamma\right) I_{d} }{\gamma + 1}  & \frac{ \left( \gamma - \beta - \beta \gamma \right) I_{d} }{\gamma + 1} & -\alpha\left(\gamma + 1\right) I_{d} \\
       \frac{I_{d}}{\gamma + 1}  & \frac{\gamma  I_{d}}{\gamma + 1} & 0_{d} \\
       \hline
        I_{d} & 0_{d} & 0_{d}
       \end{array}
    \end{bmatrix}
    \hspace{-1.5mm}
    \begin{bmatrix}
    \begin{array}{c}
        y_{k} \\ \xi^{(2)}_{k} \\ \hline u_{k}
    \end{array}
    \end{bmatrix} \hspace{-1.5mm}.
\end{align*}
Then, \li{a direct projected step as in \eqref{alg Euclidean proj}} is constructed as
\begin{align}
    &
    \begin{bmatrix}
    \begin{array}{c}
        y_{k+\frac{1}{2}} \\ \xi^{(2)}_{k+1} \\ \hline y_{k}
        \end{array}
    \end{bmatrix} = \nonumber
    \\
    &
    \hspace{-1.5mm}
    \begin{bmatrix}
    \begin{array}{cc|c}
       \frac{ \left( (\beta + 1)(\gamma + 1) - \gamma\right) I_{d} }{\gamma + 1}  & \frac{ \left( \gamma - \beta - \beta \gamma \right) I_{d} }{\gamma + 1} & -\alpha\left(\gamma + 1\right) I_{d} \\
       \frac{I_{d}}{\gamma + 1}  & \frac{\gamma  I_{d}}{\gamma + 1} & 0_{d} \\
       \hline
        I_{d} & 0_{d} & 0_{d}
       \end{array}
    \end{bmatrix}
    \hspace{-1.5mm}
    \begin{bmatrix}
    \begin{array}{c}
        y_{k} \\ \xi^{(2)}_{k} \\ \hline u_{k}
    \end{array}
    \end{bmatrix} \label{eq: Projected TM Euclidean}
\end{align}
with $u_{k} = \nabla f(y_{k})$ and $y_{k+1} = \Pi_{\Omega} \left(y_{k + \frac{1}{2}} \right)$, which can be written compactly to
\begin{align}\label{eq: algorithm 1 TM}
    y_{k+1} = & \Pi_{\Omega} \left( \frac{ \left( (\beta + 1)(\gamma + 1) - \gamma\right) }{\gamma + 1}  y_{k} + \frac{ \left( \gamma - \beta - \beta \gamma \right) }{\gamma + 1} \xi^{(2)}_{k} \right. \nonumber\\
    & \quad \quad \left. \vphantom{\frac{ \left( (\beta + 1)(\gamma + 1) - \gamma\right) }{\gamma + 1}} - \alpha \left( \gamma + 1\right) \nabla f \left( y_{k} \right) \right), \nonumber\\
    \xi^{(2)}_{k+1} = & \frac{1 }{\gamma + 1} y_{k} + \frac{\gamma }{\gamma + 1} \xi^{(2)}_{k}.
\end{align}
}


\mll{
It can be verified through PESTO \cite{taylor2017performance}, a toolbox derived from the performance estimation problems \cite{taylor2017exact}, that algorithm \eqref{eq: algorithm 1 TM} for this example is convergent. However, the same convergence rate bound is not guaranteed.}

\mll{Next, to construct Algorithm~\ref{alg:2}, we adopt three IQCs \li{in \cite{lessard2016analysis}}, namely, the sector-bound IQC, off-by-one IQC and the weighted off-by-one IQC with $\bar{\rho} = \rho$. \li{The use of these three IQCs is justifiable by the fact that they satisfy \eqref{eq: time domain discrete-time IQC} as well as \eqref{eq: truncated hard IQC}, and the reference point $(h_*, y_*, u_*)$ is independent of the fixed point of the unconstrained algorithm.}
Then, the auxiliary linear systems are given by
\begin{align*}
\Psi_{\textup{SB}}:
	 & \begin{bmatrix}
		 D_{\Psi_{\textup{SB}}}^{y} & D_{\Psi_{\textup{SB}}}^{u}
	\end{bmatrix}
	= \begin{bmatrix}
		L I_d & -I_d\\
		-m I_d & I_d
	\end{bmatrix},\\
\Psi_{\textup{Off}}:
	 & \begin{bmatrix}
		\begin{array}{c|cc}
		A_{\Psi_{\textup{Off}}} & B_{\Psi_{\textup{Off}}}^{y} & B_{\Psi_{\textup{Off}}}^{u}\\
		\hline
		C_{\Psi_{\textup{Off}}} & D_{\Psi_{\textup{Off}}}^{y} & D_{\Psi_{\textup{Off}}}^{u}
		\end{array}
	\end{bmatrix}
	= \begin{bmatrix}
		\begin{array}{c|cc}
		0_d & -L I_d & I_d\\
		\hline
		I_d & L I_d & -I_d\\
		0_d & -m I_d & I_d
		\end{array}
	\end{bmatrix}\\
\Psi_{\textup{WO}}:
	 & \begin{bmatrix}
		\begin{array}{c|cc}
		A_{\Psi_{\textup{WO}}} & B_{\Psi_{\textup{WO}}}^{y} & B_{\Psi_{\textup{WO}}}^{u}\\
		\hline
		C_{\Psi_{\textup{WO}}} & D_{\Psi_{\textup{WO}}}^{y} & D_{\Psi_{\textup{WO}}}^{u}
		\end{array}
	\end{bmatrix}
	= \begin{bmatrix}
		\begin{array}{c|cc}
		0_d & - L I_d & I_d\\
		\hline
		\bar{\rho}^2 I_d & L I_d & -I_d\\
		0_d  &-m I_d & I_d
		\end{array}
	\end{bmatrix}
\end{align*}
respectively, with 
\begin{align*}
    M_{\textup{SB}} = M_{\textup{Off}} = M_{\textup{WO}} = \begin{bmatrix}
        0 & 1\\ 1 & 0
    \end{bmatrix} \otimes I_{d}.
\end{align*}
We denote 
\begin{align*}
    & 
    A_{\Psi} = \operatorname{blkdiag}\left( A_{\Psi_{\textup{Off}}}, A_{\Psi_{\textup{WO}}} \right), 
    B_{\Psi}^{y} = \begin{bmatrix}
        B_{\Psi_{\textup{Off}}^{y}}\\ B_{\Psi_{\textup{WO}}^{y}}
    \end{bmatrix},\\
    & 
    B_{\Psi}^{u} = \begin{bmatrix}
        B_{\Psi_{\textup{Off}}^{u}}\\ B_{\Psi_{\textup{WO}}^{u}}
    \end{bmatrix},
    C_{\Psi} = \operatorname{blkdiag}\left( C_{\Psi_{\textup{Off}}}, C_{\Psi_{\textup{WO}}} \right), \\
    & 
    D_{\Psi}^{y} = \begin{bmatrix}
        D_{\Psi_{\textup{SB}}^{y}}\\ D_{\Psi_{\textup{Off}}^{y}}\\ D_{\Psi_{\textup{WO}}^{y}}
    \end{bmatrix},
    D_{\Psi}^{u} = \begin{bmatrix}
        D_{\Psi_{\textup{SB}}^{u}}\\ D_{\Psi_{\textup{Off}}^{u}}\\ D_{\Psi_{\textup{WO}}^{u}}
    \end{bmatrix},\\
    &
    M = \operatorname{blkdiag} \left( \lambda_1 M_{\textup{SB}}, \lambda_2 M_{\textup{Off}}, \lambda_3 M_{\textup{WO}} \right)
\end{align*}
where $\operatorname{blkdiag}(\cdot)$ represents the block diagonal operator and $\lambda_i \geq 0$, $i = 1, 2, 3$, are decision variables.
Then, we stack all the states together by \eqref{eq: augmented system with IQC} and \eqref{eq: IQC augmented system matrices}.
By solving \eqref{eq: LMI from IQC}, we obtain
\begin{align*}
    {P} = &
    \li{\begin{bmatrix}
    1046.220 & 1209.476 &   11.315 & 10.925\\
    1209.476 & 1788.767 &  13.908   & 15.995\\
    11.315  & 13.908 & 233.505 & -233.255\\
    10.925  & 15.995 &-233.255 & 233.523\\
    \end{bmatrix}
    }\\
    := &
    \begin{bmatrix} {P}_{11} & {P}_{12} \\ {P}_{12}^{\top} & {P}_{22}
    \end{bmatrix}, \quad {P}_{11} \in \mathbb{R}.
\end{align*}
Then, Algorithm~\ref{alg:2} is constructed as
\begin{equation}\label{eq: Projected TM P norm}
\begin{aligned}
    y_{k + \frac{1}{2}} = & \frac{ \left( (\beta + 1)(\gamma + 1) - \gamma\right) }{\gamma + 1}  y_{k} + \frac{ \left( \gamma - \beta - \beta \gamma \right) }{\gamma + 1} \xi^{(2)}_{k} \\
    & - \alpha \left( \gamma + 1\right) \nabla f \left( y_{k} \right), \\
    y_{k+1} = & \Pi_{\Omega} \left( y_{k + \frac{1}{2}} \right),\\
    \xi^{(2)}_{k+1} = & \frac{1 }{\gamma + 1} y_{k} + \frac{\gamma }{\gamma + 1} \xi^{(2)}_{k} - \mathcal{\chi} \left( y_{k+1} - y^{(1)}_{k+\frac{1}{2}} \right)
\end{aligned}
\end{equation}
where $\li{\chi = -1.209}$ is the first element of $ {P}_{22}^{-1}  {P}_{12}^{\top}$. The parameters $(\alpha, \beta, \gamma)$ remain unchanged from the unconstrained case.
}

\mll{
The trajectory of $\left\| y_{k} - y^{\textup{opt}}\right\|_2$ for the unconstrained TMM is shown in \li{Fig.~\ref{fig: numerical examples}\subref{fig: unconstrained PM}}.
The trajectories of $\left\| y_{k} - y^{\textup{opt}}_{\Omega} \right\|_2$ for algorithm \eqref{eq: Projected TM Euclidean} and \eqref{eq: Projected TM P norm} are shown in \li{Fig.~\ref{fig: numerical examples}\subref{fig: projected TM algorithm comparison 1}}. We also include the trajectories of the projected gradient algorithm for comparison and to illustrate the precision limits of the solver.
\begin{figure}[htbp]
     \centering
\begin{subfigure}[t]{1\linewidth}
    \centering
    \includegraphics[width=1\linewidth]{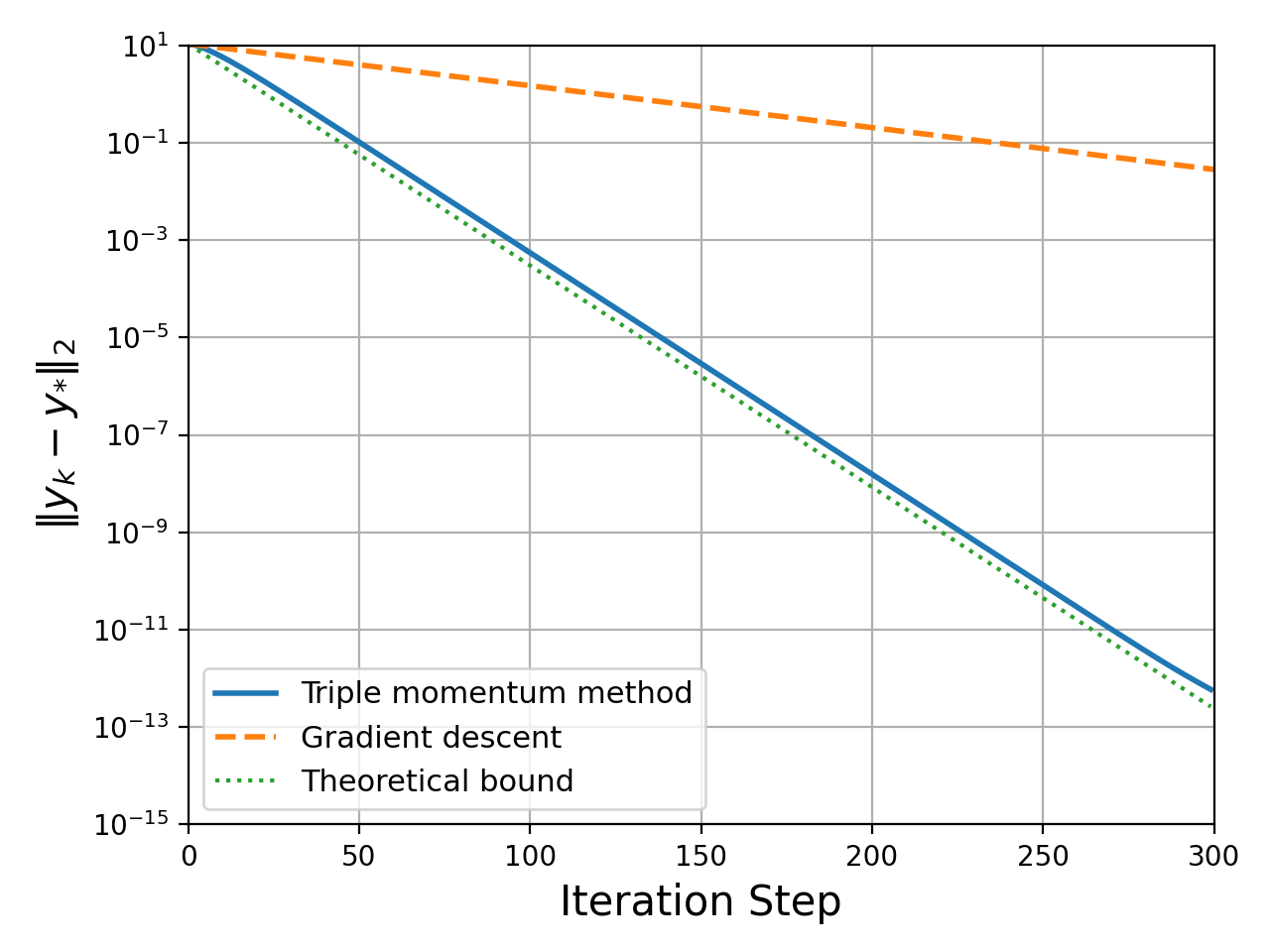}
    \caption{\li{Distance $\| y_{k} - y_{*} \|$ versus the iteration step for gradient descent and the unconstrained TMM.}}
    \label{fig: unconstrained PM}
\end{subfigure}
\begin{subfigure}[t]{1\linewidth}
    \centering
    \includegraphics[width=1\linewidth]{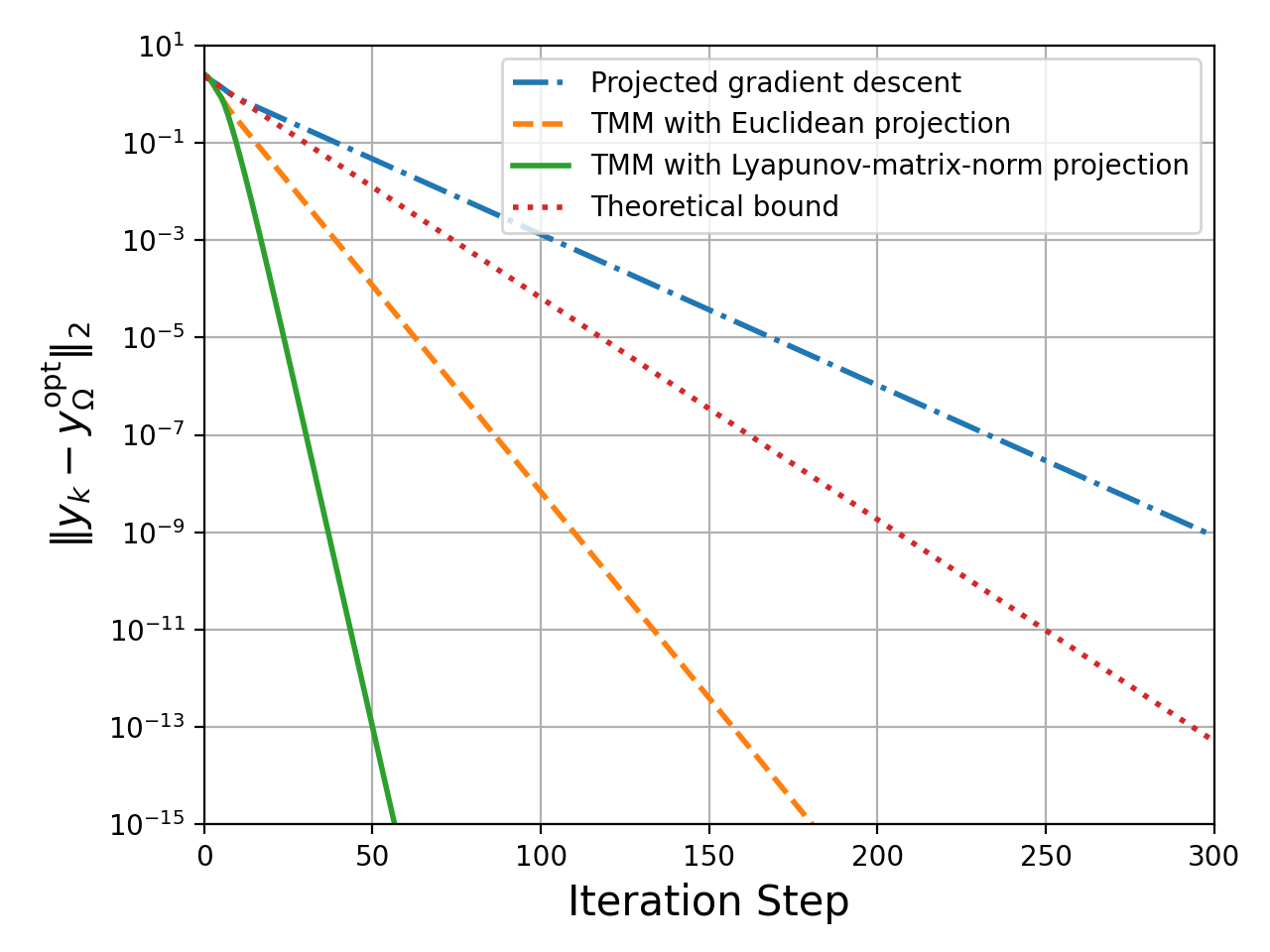}
    \caption{Distance $\| y_{k} - y^{\textup{opt}}_{\Omega} \|$ versus the iteration step for projected gradient descent method, \li{the TMM with direct Euclidean projection~\eqref{eq: Projected TM Euclidean}, and the TMM with projection in the Lyapunov-matrix norm~\eqref{eq: Projected TM P norm}.}}
    \label{fig: projected TM algorithm comparison 1}
\end{subfigure}
\caption{Convergence error for the triple momentum \li{method} and its projected variants, together with the theoretical rate bound $\rho = 1 - \sqrt{m/L}$.}
        \label{fig: numerical examples}
\end{figure}
It can be observed that the algorithms \eqref{eq: Projected TM Euclidean} and \eqref{eq: Projected TM P norm} exhibit faster convergence than the unconstrained dynamics and the corresponding theoretical rate bound. This is expected, as the projection step may pull each iterate closer to the optimal point and accelerate the convergence, even though analysis usually treats the projection as a non-expansive operator. Furthermore, algorithm~\eqref{eq: Projected TM P norm} converges faster \li{than algorithm~\eqref{eq: Projected TM Euclidean}}, likely due to the acceleration introduced by the momentum term.
The source code used for this numerical example is available online~\cite{li2026projectedTMMcode}.
}

\section{Conclusion}\label{Conclusion}
We have proposed a systematic procedure to construct first-order projected algorithms on top of the unconstrained optimization algorithms in the Lur'e form.
We have proved that the proposed projected algorithms have the same convergence rate bounds obtained through a quadratic Lyapunov function associated with IQCs for the gradient dynamics of the unconstrained algorithms. This finding significantly reduces the effort required to analyze first-order algorithms in that one only needs to consider unconstrained problems and their dynamics. 
\icl{Future work includes investigating} 
whether a similar analysis could be carried out for other projected algorithms, such as the projected Newton method and mirror descent algorithms, \ml{where the system matrices involved in the algorithm may not admit a Kronecker structure}. Meanwhile, designing a similar procedure for continuous-time algorithms could be an interesting extension.


\section*{Appendix}

\subsection{Proof of Lemma~\ref{lem: property of the system matrix A}}\label{appendix proof of Lemma property of the system matrix A}
\li{Let $d = 1$ without loss of generality. We first show that $I - \tilde{A}_{22}$ is nonsingular.
    By \cite{scherer2021convex}, the closed-loop system matrix $\tilde{A} + m \tilde{B} \tilde{C}$ is Schur stable, then, $\det \left( I - \left( \tilde{A} + m \tilde{B}\tilde{C} \right) \right) > 0$.  From \eqref{eq: state-space form with state output for unconstrained optimization}, we have $\tilde{B} \tilde{C} = \begin{bmatrix}
    c_1 & 0 \\ 0 & 0
\end{bmatrix}$, then
\begin{align}\label{eq: determinant of IAmBC}
\begin{aligned}
    & \det \left( I - \left( \tilde{A} + m \tilde{B}\tilde{C} \right) \right)\\
    = & \det \left( \begin{bmatrix}
        1 - \left( \tilde{A}_{11} + m c_1 \right) & -\tilde{A}_{12}\\ - \tilde{A}_{21} & I - \tilde{A}_{22}
    \end{bmatrix} \right)\\
    = & \left(1 - \tilde{A}_{11} - m c_1 \right) \det \left( I - \tilde{A}_{22} \right) + \phi > 0.
\end{aligned}
\end{align}
where $\phi$ denotes the rest of the terms independent of $m$. 
It is known that $\tilde{A}$ has one eigenvalue at one (when $d = 1$), and the rest are strictly within the unit disc \cite{lessard2016analysis}, which implies 
\begin{align}\label{eq: determinant of I-A}
\det \left( I -  \tilde{A} \right) = \left(1 - \tilde{A}_{11} \right) \det \left( I - \tilde{A}_{22} \right) + \phi = 0.
\end{align}
Suppose that $I - \tilde{A}_{22}$ is singular, that is, $\det \left( I -  \tilde{A}_{22} \right) = 0$. Then $\det \left( I -  \tilde{A} \right) = \det \left( I - \left( \tilde{A} + m \tilde{B}\tilde{C} \right) \right) = \phi = 0$, regardless of $m$, which is a contradiction. Thus, $I - \tilde{A}_{22}$ is nonsingular.
Eliminating $\phi$ from \eqref{eq: determinant of I-A}, \eqref{eq: determinant of IAmBC}, we obtain
\begin{align*}
    \det \left( I - \left( \tilde{A} + m \tilde{B}\tilde{C} \right) \right) = - m c_1 \det \left( I - \tilde{A}_{22} \right) > 0.
\end{align*}
By Assumption~\ref{assumption on CB}, $c_1 <0$. Therefore, $\det \left( I - \tilde{A}_{22} \right) > 0$.\\
Finally, since \(I-\tilde A_{22}\) is nonsingular, the Schur complement formula applied to \(I-\tilde A\) gives
\begin{align*}
& \det(I-\tilde A)\\
= &
\det(I-\tilde A_{22})
\det \left(I-\tilde A_{11}-\tilde A_{12}(I-\tilde A_{22})^{-1}\tilde A_{21}\right) = 0
\end{align*}
Because \(\det(I-\tilde A_{22})\neq 0\), it follows that
\[
I-\tilde A_{11}-\tilde A_{12}(I-\tilde A_{22})^{-1}\tilde A_{21}=0,
\]
which yields \eqref{eq: property of the system matrix A}.}

\subsection{Proof of Lemma~\ref{lem: Schur of transformed A_22}}\label{appendix proof of Lemma Schur of transformed A_22}
\li{Let us consider a coordinate transformation $\bar{A} = T A T^{-1}$, where
\begin{align*}
    T = \begin{bmatrix}
        1 & 0\\ P_{22}^{-1} P_{12}^\top & I
    \end{bmatrix}, \quad
    T^{-1} = \begin{bmatrix}
        1 & 0\\ - P_{22}^{-1} P_{12}^\top & I
    \end{bmatrix}
\end{align*}
such that $P$ is diagonalized in the new coordinate, that is,
\begin{align*}
    P = T^\top \underbrace{\begin{bmatrix}
        P_{11} - P_{12} P_{22}^{-1} P_{12}^\top & 0\\ 0 & P_{22} 
    \end{bmatrix}}_{\bar{P}} T.
\end{align*}
Then, we have
\begin{align}\label{eq: diagonalized Lyapunov inequality}
    \bar{A}^\top \bar{P} \bar{A} \preceq \rho^2 \bar{P}.
\end{align}
By direct calculation,
\[
\bar A_{22}=A_{22}+P_{22}^{-1}P_{12}^\top A_{12}.
\]
Now, the lower-right block of \eqref{eq: diagonalized Lyapunov inequality} gives
\[
\bar A_{12}^\top \bar P_{11}\bar A_{12}
+
\bar A_{22}^\top \bar P_{22}\bar A_{22}
\preceq
\rho^2 \bar P_{22}.
\]
Since \(\bar P_{11}\succ0\) by the positive definiteness of $\bar{P}$, we have
\[
\bar A_{22}^\top \bar P_{22}\bar A_{22} \preceq \rho^2 \bar P_{22}.
\]
As \(\bar P_{22}=P_{22}\succ0\) and \(\rho<1\), it follows that \(\bar A_{22}\) is Schur stable.
It can be shown similarly that $\bar{A}_{11} = A_{11} - A_{12} P_{22}^{-1} P_{12}^\top$ is also Schur stable. This completes the proof.
}

\section*{References}
\bibliographystyle{IEEEtran}
\bibliography{References}

\begin{IEEEbiography}[{\includegraphics[width=1in,height=1.25in,clip,keepaspectratio]{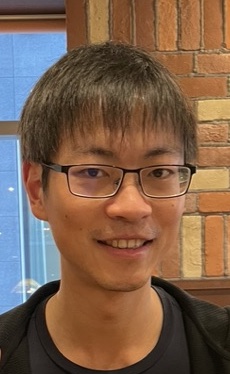}}]{Mengmou Li} (Member, IEEE) 
is currently a Tenure-Track Associate Professor at the Graduate School of Advanced Science and Engineering, Hiroshima University, Japan. He received the B.S. degree in Physics from Zhejiang University, China, in 2016, and the Ph.D. degree in Electrical and Electronic Engineering from The University of Hong Kong in 2020. He held postdoctoral positions at The Hong Kong University of Science and Technology, Hong Kong, the Control Group at the University of Cambridge, UK, and Tokyo Institute of Technology, Japan. He was a Specially Appointed Assistant Professor at Tokyo Institute of Technology. His research interests include cyber-physical systems, optimization, and robust control.
\end{IEEEbiography}

\begin{IEEEbiography}[{\includegraphics[width=1in,height=1.25in,clip,keepaspectratio]{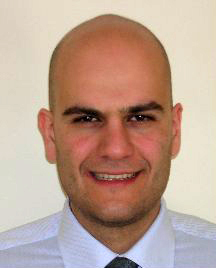}}]{Ioannis Lestas}(Member, IEEE)
is a Professor of Control Engineering at the Department of Engineering, University of Cambridge. He received the B.A. (Starred First) and M.Eng. (Distinction) degrees in Electrical Engineering and Information Sciences and the Ph.D. in control engineering from the University of Cambridge (Trinity College). His doctoral work was performed as a Gates Scholar. He has been a Junior Research Fellow of Clare College, University of Cambridge and he was awarded a five year Royal Academy of Engineering research fellowship. He is also the recipient of a five year ERC starting grant, and an ERC proof of concept grant. He is currently serving as Associate Editor for the IEEE Transactions on Automatic Control, the IEEE Transactions on Smart Grid, and the IEEE Transactions on Control of Network Systems. His research interests include analysis and control of large-scale networks with applications in power systems and smart grids.
\end{IEEEbiography}

\begin{IEEEbiography}[{\includegraphics[width=1in,height=1.25in,clip,keepaspectratio]{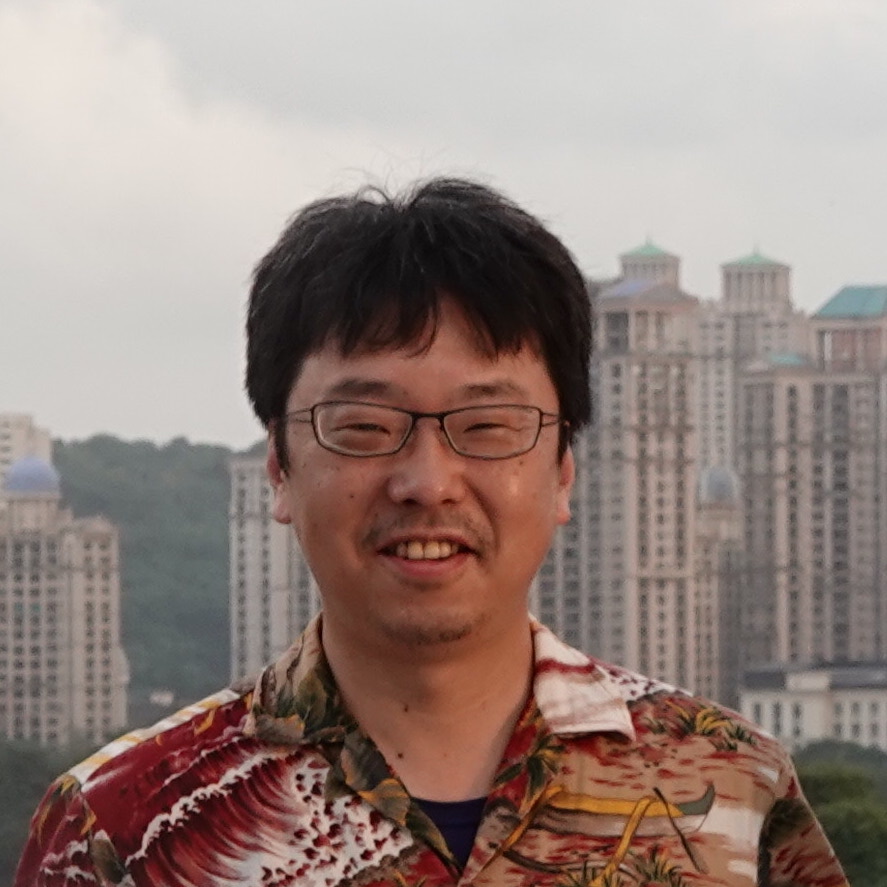}}]{Masaaki Nagahara}(Senior Member, IEEE)
received a bachelor's degree in engineering from Kobe University in 1998 and a master's degree and a Doctoral degree in informatics from Kyoto University in 2000 and 2003, respectively. He is currently a Full Professor at the Graduate School of Advanced Science and Engineering, Hiroshima University. He has been a Visiting Professor at Indian Institute of Technology Bombay since 2017. His research interests include control theory, machine learning, and sparse modeling. He received remarkable international awards: Transition to Practice Award in 2012 and George S. Axelby Outstanding Paper Award in 2018 from the IEEE Control Systems Society. Also, he received many awards from Japanese research societies, such as SICE Young Authors Award in 1999, SICE Best Paper Award in 2012, SICE Best Book Authors Awards in 2016 and 2021, SICE Control Division Research Award (Kimura Award) in 2020, and the Best Tutorial Paper Award from the IEICE Communications Society in 2014. He is a senior member of IEEE, and a member of IEICE, SICE, ISCIE, and RSJ.
\end{IEEEbiography}

\end{document}